\documentclass[11pt]{article}

\usepackage{amsmath, amssymb, amsthm, mathrsfs, mathtools}
\usepackage{geometry, graphicx}
\usepackage{color, hyperref, tikz}
\usepackage{enumitem}

\definecolor{darkred}{rgb}{0.7, 0.0, 0.0}

\allowdisplaybreaks

\hypersetup{
    colorlinks=true,
    linkcolor=blue,
    citecolor=red,
    urlcolor=cyan
}

\newtheorem{theorem}{Theorem}[section]
\newtheorem{lemma}[theorem]{Lemma}
\newtheorem{proposition}[theorem]{Proposition}
\newtheorem{corollary}[theorem]{Corollary}
\theoremstyle{definition}
\newtheorem{definition}[theorem]{Definition}
\newtheorem{remark}[theorem]{Remark}
\newtheorem{assumption}[theorem]{Assumption}

\newcommand{\R}{\mathbb{R}}

\newcommand{\om}{\boldsymbol{\omega}}
\newcommand{\xiVec}{\boldsymbol{\xi}}
\newcommand{\uVec}{\boldsymbol{u}}
\newcommand{\bmo}{\mathrm{bmo}} 
 
\newcommand{\omVec}{\boldsymbol{\omega}}
\newcommand{\Lop}{\mathcal{L}}
\newcommand{\hx}{\hat{\mathbf x}}
\newcommand{\es}{e_*}
\newcommand{\E}{\mathbb{E}}
\newcommand{\Prob}{\mathbb{P}}
\DeclareMathOperator{\osc}{osc}

\title{\textbf{On Decay of the Local Mean Oscillations of the Vorticity Direction in Critical Navier-Stokes Flows}}
\author{Zoran Gruji\'c \\  [0.5cm]  UAB }

\date{\today}

\begin{document}

\maketitle

\begin{abstract}
We isolate and analyze the geometric PDE governing the evolution of the vorticity direction in the 3D incompressible (unforced) Navier-Stokes equations (NSE), restricted to the case of a critical spatial point singularity where the vorticity magnitude concentrates as $O(|x|^{-2})$, inhabiting the critical Lorentz space $L^{3/2, \infty}$. The PDE consists of the Harmonic Map Heat Flow (HMHF) into the sphere supplemented with the fluid transport, cross-diffusion and tangential strain. The question is whether the NSE mechanics can propagate logarithmic decay of the local mean oscillations of the direction -- the condition $\xiVec \in \bmo_{1/|\log r|}$ which (in this setting) was shown in the companion paper to prevent finite time blow-up. A key observation is that the $O(|x|^{-2})$ concentration, factored out of the viscous cross-diffusion, generates an outward radial drift $4\nu\, x/|x|^2$ at the core. Under an explicit condition bounding the inward radial fluid velocity relative to this drift, we prove a barrier lemma for the total drift-diffusion operator (a maximum principle, screening of outer data by $(|x|/R_0)^k$, and hitting-time asymptotics of the inner scale), and show that the HMHF nonlinearity is harmless for the quantity $\frac12|\xiVec - e|^2$, which is a subsolution on any hemisphere. This yields a transfer theorem: the $\bmo_{1/|\log r|}$ regularity of the direction at the core, uniformly up to the singular time, is controlled by a logarithmic modulus in time of the direction at the inner scale, together with the physical strain. Crucially, the strain enters the direction equation only through its tangential component $P_{\xiVec^\perp} S\xiVec$ which vanishes precisely when the direction is an eigenvector of the strain tensor. Logarithmic alignment of the direction with any of the eigenvectors is then shown to imply logarithmic sub-criticality of the physical nonlinearity closing the argument. Since this includes the eigenvector carrying the maximal stretching, the result is in contrast to the classical geometric regularity criteria which are built on depleting the full vortex-stretching term and thus confined to configurations of weak stretching. 
\end{abstract}

\tableofcontents

\vspace{0.5cm}
\section{Introduction}

The question of whether the 3D Navier-Stokes equations (NSE) can exhibit spontaneous formation of a singularity (no external force) remains a
fundamental open problem in mathematical physics. 
The pursuit of global regularity versus finite time singularity formation hinges on the competition between nonlinear vortex stretching and viscous diffusion. Central to understanding this competition is the local spatial coherence of the vorticity direction, $\xiVec(x,t) = \om(x,t)/|\om(x,t)|$. Pioneering work by Constantin and Fefferman \cite{Constantin1994, Constantin1993} established that Lipschitz continuity of $\xiVec$ in the regions of intense vorticity suffices to prevent blow-up. Subsequent refinements include \cite{BdVBe02}  where this threshold was weakened to $\frac{1}{2}$-H\"older continuity and \cite{Giga2011} where it was shown  -- in the case of Type I blow-up -- that any modulus of uniform continuity suffices (see also \cite{BdVGiGr16}).

\medskip

This paper constitutes the second part of a two-part geometric-analytic framework. In the first part, our companion work \cite{GrujicProj1}, we established -- in the case of a \emph{critical spatial point singularity} (the vorticity magnitude concentrating as $O(|x|^{-2})$, inhabiting the critical Lorentz space $L^{3/2, \infty}$) -- that if the vorticity direction resides locally in a logarithmically weighted space of bounded mean oscillations $\bmo_{1/|\log r|}$, a functional space that permits highly oscillatory, discontinuous topological defects, the vortex stretching is fundamentally depleted. This logarithmic depletion triggers a cascade of spatial scaling imbalances that ultimately forces the flow to avert finite-time blow-up via the harmonic measure maximum principle.

\medskip

The central objective of the present paper is to investigate whether the geometric mechanics of the 3D Navier-Stokes equations are capable of propagating $\bmo_{1/|\log r|}$ regularity. As in \cite{GrujicProj1}, we focus on the case of the critical spatial point singularity. By framing our geometric bounds within this critical concentration, we isolate the nonlinear PDE governing the evolution of the vorticity direction field $\xiVec$. The resulting equation possesses the structural footprint of a Harmonic Map Heat Flow (HMHF) into the spherical target manifold $\mathbb{S}^2$, supplemented with fluid transport, cross-diffusion and tangential strain.

\medskip

In general, such parabolic systems (including a pure HMHF) are susceptible to finite time `topological bubbling' where a geometric singularity (e.g., a dipole bubble) forms and the spatial gradients concentrate at a critical $O(|x|^{-1})$ rate, trapping the field in unweighted $\bmo$ and preventing any decay of local mean oscillations. Our analysis proceeds in three stages.

\begin{enumerate}[leftmargin=*]
    \item \textbf{Stage I: the radial barrier.} Factoring the singular envelope out of the viscous cross-diffusion term generates an \emph{outward radial drift} $+4\nu \frac{\mathbf{x}}{|x|^2} \cdot \nabla$ at the core. Its stationary radial solutions are $1$ and $|x|^3$, and the fluid's own radial velocity competes with it directly: a structural condition, which we call (H$_k$), is that the inward radial fluid velocity at the core stays below $(3-k)\nu/|x|$ for some $k \in (0,3)$. Under (H$_k$) we prove a barrier lemma for the total drift-diffusion operator (Section \ref{sec:barrier}): $|x|^k$ is a supersolution, so the influence of data on a sphere of radius $R_0$ is screened at the core by $(|x|/R_0)^k$, and the diffusion process attached to the operator reaches the inner scale of the singularity in time $\asymp |x|^2/\nu$, with explicit tails obtained by comparison with Bessel processes. The condition (H$_k$) is not implied by $\uVec \in L^{3,\infty}$ or by divergence-freeness, and we state it as a hypothesis. It has a transparent reading -- the singular point is not carried into by its surroundings faster than the viscous drift pushes out -- and Section \ref{sec:application} shows that it is \emph{implied by approximate symmetry of the vorticity magnitude about the local vortex axis}.

    \item \textbf{Stage II: the PDE and the core history.} The centripetal constraint $\nu |\nabla \xiVec|^2 \xiVec$  -- responsible for topological locking of the local mean oscillations in the case of a pure HMHF -- turns out to require no estimate at all: for any fixed unit vector $e$, the deviation $\theta = \frac12|\xiVec - e|^2$ satisfies $\mathcal{L}\theta = -\nu|\nabla\xiVec|^2 (1-\theta)$ and is therefore a subsolution of the linear drift-diffusion operator on the hemisphere $\xiVec \cdot e \ge 0$. Combining this with the barrier lemma, we prove (Section \ref{sec:skeletal}) that the $\bmo_{1/|\log r|}$ norm of $\xiVec(\cdot,t)$ at the core is controlled, uniformly up to the singular time, by the temporal modulus of the direction at the inner scale over the recent past, plus the tangential strain. In particular, a logarithmic modulus in time of the core direction is transferred to a logarithmic modulus in space. Note that a rotating core, whose direction has an $O(1)$ oscillation on every time scale, produces unweighted $\bmo$ and nothing better, so the temporal hypothesis cannot be dropped. At the same time, the core history condition is forgiving -- for example, in the case of the asymptotically self-similar (or asymptotically discretely self-similar) profiles, any scenario in which the core direction settles at least as fast as the slowest generic nonlinear-stability rate is in. In contrast to the classical geometric PDE techniques where extrinsic gauge transformations or moving frames are deployed to sub-criticalize the HMHF constraint \cite{Uhlenbeck1982, MeyerRiviere2003, Riviere2007, RiviereStruwe2008, Moser2009, Lamm2010, Hong2017}, the sphere's own convexity together with the intrinsic drift of the fluid's critical concentration does the work here.

    \item \textbf{Stage III: alignment.} The tangential strain $F_{tan} = S\xiVec - (\xiVec \cdot S\xiVec)\xiVec$ vanishes precisely when $\xiVec$ is an eigenvector of the strain tensor $S$. The physical hypothesis required by Stage II is therefore a logarithmic direction-eigenvector alignment at the core, securing a logarithmic sub-criticality of the tangential strain, $|F_{tan}| \lesssim |x|^{-2}|\log |x||^{-3}$  (the alignment of vorticity with the intermediate strain eigenvector being a robust feature of turbulent flows \cite{Ashurst1987}). We package this into a single application (Section \ref{sec:application}): a strain-aligned, inflow-controlled core does not produce a singularity. 
    \end{enumerate}

We close the introduction with a word on what is being depleted, since this is where the present approach departs from the classical geometric regularity theory. The criteria descending from Constantin--Fefferman \cite{Constantin1994, Constantin1993, BdVBe02, BerselliCordoba2009, Giga2011, BdVGiGr16},
as well as the Beltrami and axisymmetric no-swirl mechanisms, all start from the vorticity equation and aim at controlling the full vortex-stretching term $S\om$: coherence of the direction depletes it through cancellations in the Biot-Savart kernel, Beltrami alignment and the no-swirl geometry deplete it algebraically. In every case the favorable configurations are those in which the stretching itself is weak. In the two-part scheme of \cite{GrujicProj1} and the present paper, the stretching is handled in Part I by the logarithmic decay of the local mean oscillations of the direction, and the direction equation \eqref{eq:direction_pde_raw} sees the strain only through its tangential component $F_{tan} = P_{\xiVec^\perp} S\xiVec$. The longitudinal component $\alpha = \xiVec \cdot S\xiVec$, which is the stretching, drops out of the direction dynamics entirely. Now $F_{tan} = 0$ exactly when $\xiVec$ is an eigenvector of $S$, and this holds for any of the three eigenvectors, including the one corresponding to the largest eigenvalue (the configuration of maximal stretching), as well as the one corresponding to the middle eigenvalue (the configuration consistently observed in computational simulations of turbulence since  \cite{Ashurst1987}). In this setting, the very mechanism amplifying the vorticity magnitude (stretching) -- at the end of the day -- is responsible for driving the flow into the dissipation range, preventing a finite time blow-up -- a signature of a \emph{self-defeating singularity}.

\section{Formulation of the PDE}

To execute the first stage of our framework we begin by extracting the evolution equation for the direction field. Let $\om = \nabla \times \uVec$ be the vorticity vector associated with a smooth solution to the 3D Navier-Stokes equations. To isolate the purely geometric evolution of the flow, we decompose the field into its scalar magnitude $\omega = |\om|$ and its unit direction $\xiVec = \om/\omega$, defined wherever $\omega > 0$. Substituting the factorization $\om = \omega \xiVec$ into the vorticity equation $\partial_t \om + (\uVec \cdot \nabla)\om = S\om + \nu \Delta \om$ (where $S$ is the strain tensor) and expanding the viscous diffusion term yields:
\begin{equation}
\Delta (\omega \xiVec) = (\Delta \omega) \xiVec + \omega \Delta \xiVec + 2 (\nabla \omega \cdot \nabla) \xiVec.
\end{equation}

Taking the inner product of the expanded vector equation with $\xiVec$, and utilizing the geometric constraints imposed by the spherical target manifold $|\xiVec|^2 = 1$ ($\xiVec \cdot \partial_t \xiVec = 0$, $\xiVec \cdot \nabla \xiVec = 0$, and $\xiVec \cdot \Delta \xiVec = -|\nabla \xiVec|^2$), we extract the scalar evolution equation for the magnitude:
\begin{equation}\label{eq:mag_exact}
\partial_t \omega + \uVec \cdot \nabla \omega - \nu \Delta \omega + \nu \omega |\nabla \xiVec|^2 = \alpha \omega,
\end{equation}
where $\alpha = \xiVec \cdot S \xiVec$ is the scalar stretching eigenvalue. Subtracting this scalar projection (multiplied by $\xiVec$) from the full vector equation and dividing by $\omega$ isolates the evolution of the unit direction:
\begin{equation}\label{eq:direction_pde_raw}
\begin{split}
\partial_t \xiVec + (\uVec \cdot \nabla)\xiVec &= \underbrace{\left[ S\xiVec - (\xiVec \cdot S\xiVec)\xiVec \right]}_{F_{tan}(\xiVec)} \\
&\quad + \nu \Delta \xiVec + \nu |\nabla \xiVec|^2 \xiVec + 2\nu (\nabla \ln \omega \cdot \nabla)\xiVec.
\end{split}
\end{equation}

Two features of \eqref{eq:direction_pde_raw} will matter. First, the strain tensor enters the direction equation only through its tangential component $F_{tan} = S\xiVec - (\xiVec \cdot S\xiVec)\xiVec = P_{\xiVec^\perp} S\xiVec$; the longitudinal component $\alpha = \xiVec \cdot S\xiVec$, which drives the magnitude through \eqref{eq:mag_exact}, does not act on the direction at all. Since $S$ is symmetric, $F_{tan} = 0$ if and only if $\xiVec$ is an eigenvector of $S$, whichever of the three eigenvalues it carries. Second, equation \eqref{eq:direction_pde_raw} possesses the core structural footprint of the HMHF into the unit sphere $\mathbb{S}^2$. In particular, the nonlinear centripetal constraint $\nu |\nabla \xiVec|^2 \xiVec$ acts to keep the vector field on the manifold. In the absence of mitigating fluid mechanisms, this quadratic gradient term is known to drive finite time singularity formation (e.g., formation of a dipole bubble), concentrating spatial gradients at a singular $1/|x|$ rate and locking the vorticity direction in unweighted $\bmo$. In contrast to the classical geometric PDE techniques utilized to sub-criticalize HMHF-type problems, our stabilization mechanism will rely on the spatial interplay between the target manifold constraints and the intrinsic cross-diffusion of the fluid's singular envelope, $2\nu (\nabla \ln \omega \cdot \nabla)\xiVec$.

\section{Drift Decomposition}
\label{sec:drift}

We start by formalizing the macroscopic profile of the singularity. For $t < T^*$ the solution is smooth, so a critical profile cannot hold literally at the singular point; we therefore introduce an inner scale below which the profile is regularized.

\begin{definition}[Critical spatial point singularity]\label{def:critical_point}
A \emph{critical spatial point singularity} at $(0,T^*)$ with inner scale $\ell(t)$ consists of a positive function $\ell$ on $(t_0,T^*)$, $\ell(t) \to 0$ as $t \uparrow T^*$, a radius $R_0>0$, and a shape factor $\Phi$ such that on the annular region
\begin{equation}\label{eq:annulus}
A = \{(x,t):\ \ell(t) < |x| < R_0,\ t_0 < t < T^*\}
\end{equation}
the vorticity magnitude satisfies
\begin{equation}\label{eq:profile}
\omega(x,t) = \Phi(x,t)\,|x|^{-2}, \qquad 0 < c_0 \le \Phi \le C_0, \qquad |\nabla \ln \Phi| \le C_\Phi\,|x|^{-1},
\end{equation}
and the direction satisfies the critical bound
\begin{equation}\label{eq:critgrad}
|\nabla \xiVec(x,t)| \le \frac{C_1}{\max(|x|,\ell(t))} \qquad \text{for } |x| < R_0,\ t_0<t<T^*.
\end{equation}
Inside the inner ball $|x| \le \ell(t)$ the vorticity is bounded by $C_0\,\ell(t)^{-2}$. The velocity is a strong solution on $(0,T^*)$ with $\omega \in L^\infty\bigl((t_0,T^*); L^{3/2,\infty}(\R^3)\bigr)$.
\end{definition}

\begin{remark}
\begin{enumerate}[leftmargin=*]
\item The shape factor captures spatial variations (angular profiles $\Phi \equiv U(x/|x|)$, or oscillatory phases like $\Phi = 2 + \sin(\log(1/|x|))$ encoding discrete self-similarity). No smallness is imposed on $C_0$; the only smallness that will be required of $\Phi$ is $C_\Phi < (3-k)/2$ in the inflow condition (H$_k$) of Section \ref{sec:barrier}. In the oscillatory examples this means the admissible oscillation amplitude scales with $\nu$.
\item The critical gradient bound \eqref{eq:critgrad} is not a regularity gain (no logarithm). It is what the profile delivers through the magnitude equation \eqref{eq:mag_exact}, $\nu|\nabla\xiVec|^2 = \alpha - \partial_t \ln\omega - \uVec\cdot\nabla\ln\omega + \nu\Delta\omega/\omega$, once $|\Delta\Phi| \lesssim |x|^{-2}$, $|\partial_t \ln \Phi| \le C$ and $|S| \lesssim |x|^{-2}$ are available; this is the only place where bounds on $\Delta \Phi$ and $\partial_t \Phi$ would enter, and we prefer to state the consequence. It is used only for balls that do not reach the core (Corollary \ref{cor:bmo}).
\item For Type I scenarios $\ell(t) \sim \sqrt{\nu(T^*-t)}$, but nothing below depends on the form of $\ell$.
\item All hypotheses are to be imposed in a frame $x \mapsto x - X(t)$ attached to the singular point. The vorticity and direction equations are invariant under arbitrary translating frames (a uniform acceleration is a gradient force), with $\uVec$ replaced by $\uVec - \dot X(t)$; this matters only for the inflow condition (H$_k$).
\end{enumerate}
\end{remark}

The extraction and factorization of the singular envelope from the cross-diffusion drift $\mathbf{V}_{reg} = 2\nu \nabla \ln \omega$ constitutes the core of our geometric stabilization. Substituting the critical point singularity profile, the expansion of the logarithm decomposes the drift into a principal singular radial component and a shape-factor component:
\begin{equation}\label{eq:v_reg}
\mathbf{V}_{reg} = 2\nu \nabla \ln \left( \Phi(x,t)|x|^{-2} \right) = \underbrace{-4\nu \frac{\mathbf{x}}{|x|^2}}_{\mathbf{V}_{reg}^{rad}} + \underbrace{2\nu \frac{\nabla \Phi(x,t)}{\Phi(x,t)}}_{\mathbf{V}_{reg}^{tan}}.
\end{equation}

We rearrange \eqref{eq:direction_pde_raw} into a standard advection-diffusion framework by shifting the total drift to the left-hand side:
\begin{equation}\label{eq:direction_pde_drift}
\begin{split}
\partial_t \xiVec &+ \left( \uVec - 2\nu \frac{\nabla \Phi}{\Phi} \right) \cdot \nabla \xiVec + \underbrace{4\nu \frac{\mathbf{x}}{|x|^2}}_{\text{Active Repulsion}} \cdot \nabla \xiVec \\
&\quad - \nu \Delta \xiVec = F_{tan}(\xiVec) + \nu |\nabla \xiVec|^2 \xiVec.
\end{split}
\end{equation}

This decomposition separates the transport mechanics into two categories.

\begin{enumerate}
    \item \textbf{The fluid and shape-factor drifts.}
    By the Biot-Savart law and the scaling of the critical profile, the advective velocity $\uVec$ resides in $L^\infty\bigl((t_0, T^*); L^{3, \infty}(\R^3)\bigr)$, and in fact satisfies the pointwise bound $|\uVec| \le C_u |x|^{-1}$ near the core (Lemma \ref{lem:BS}); it is divergence-free. The shape-factor drift $-2\nu \nabla \ln \Phi$ is the gradient of a bounded potential with $|\nabla \ln \Phi| \le C_\Phi|x|^{-1}$. These fields are not neutral in general: only their radial components matter for what follows, and their inward radial parts compete directly with the active drift. Divergence-freeness of $\uVec$ does not help here (the divergence-free critical drift theory of \cite{SSSZ2012} operates the best with bounded cutoffs, not with a singular weight) -- for a divergence-free drift tested against the singular weight $|x|^\gamma$ the advective term reduces to a zero-order term in $\uVec \cdot \hat{\mathbf x}$ of the same order as the coercivity it must be absorbed by -- and the condition (H$_k$) of Section \ref{sec:barrier}, is a one-sided bound on the inward radial velocity relative to $\nu$.

    \item \textbf{The active radial drift.}
    The principal geometric drift is an outward, repulsive advection $+4\nu \frac{\mathbf{x}}{|x|^2} \cdot \nabla$. Characteristics governed by this field are driven outward from the singularity; equivalently, the diffusion process attached to the operator is drawn into the core and absorbed there. In radial coordinates the operator acting on radial functions is $\nu(\partial_{rr} - \frac{2}{r}\partial_r)$, a Bessel operator of dimension $-1$; its stationary radial solutions are $1$ and $|x|^3$, and the natural symmetrizing measure is $|x|^{-4}dx$. The exponent $3$ is the sharp form of the screening that the barrier lemma of Section \ref{sec:barrier} extracts under (H$_k$) with any exponent $k<3$.
\end{enumerate}
\begin{figure}[htbp]
    \centering
    \begin{minipage}{0.32\textwidth}
        \centering
        \begin{tikzpicture}[scale=0.8]
            \filldraw[black] (0,0) circle (3pt);
            \foreach \angle in {0, 45, 90, 135, 180, 225, 270, 315} {
                \draw[->, very thick, red!80!black] (\angle:2.2) -- (\angle:0.3);
            }
            \node[below] at (0,-2.5) {(a) Unmitigated Bubbling};
        \end{tikzpicture}
    \end{minipage}\hfill%
    \begin{minipage}{0.32\textwidth}
        \centering
        \begin{tikzpicture}[scale=0.8]
            \filldraw[black] (0,0) circle (3pt);
            \foreach \angle in {22.5, 67.5, 112.5, 157.5, 202.5, 247.5, 292.5, 337.5} {
                \draw[->, ultra thick, blue!80!black] (\angle:0.3) -- (\angle:1.2);
                \draw[->, thick, blue!60!black] (\angle:1.3) -- (\angle:1.8);
                \draw[->, thin, blue!40!black] (\angle:1.9) -- (\angle:2.3);
            }
            \node[below] at (0,-2.5) {(b) Regularizing Drift};
        \end{tikzpicture}
    \end{minipage}\hfill%
    \begin{minipage}{0.32\textwidth}
        \centering
        \begin{tikzpicture}[scale=0.8]
            \filldraw[black] (0,0) circle (3pt);
            \draw[dashed, fill=blue!10, draw=blue!80!black, thick] (0,0) circle (1.0);
            \foreach \angle in {0, 90, 180, 270} {
                \draw[->, very thick, purple!80!black] (\angle:2.4) .. controls (\angle:1.4) and (\angle+45:1.1) .. (\angle+70:1.7);
            }
            \node[below] at (0,-2.5) {(c) Stabilized Safe Zone};
        \end{tikzpicture}
    \end{minipage}
    \vspace{0.2cm}
    \caption{Schematic of the screening of the core. (a) The unmitigated Harmonic Map Heat Flow drives gradients toward finite-time concentration at the origin. (b) The critical profile inherently generates an active, singular outward drift $\mathbf{V}_{reg}^{rad} \sim \frac{\mathbf{x}}{|x|^2}$. (c) Under the inflow condition (H$_k$), $|x|^k$ is a supersolution of the total drift-diffusion operator (Lemma \ref{lem:barrier}): data on the outer sphere influence the core only through the factor $(|x|/R_0)^k$, while the core history is transported outward undamped.}
    \label{fig:depletion_schematic}
\end{figure}

Defining the total drift-diffusion operator
\begin{equation}\label{eq:L_safe}
\Lop = \partial_t - \nu \Delta + b \cdot \nabla, \qquad b = \uVec + 4\nu \frac{\mathbf{x}}{|x|^2} - 2\nu \nabla \ln \Phi ,
\end{equation}
the geometric PDE becomes
\begin{equation}\label{eq:pde_condensed}
\Lop\xiVec = F_{tan}(\xiVec) + \nu |\nabla \xiVec|^2 \xiVec \qquad \text{on } A.
\end{equation}
We keep $-2\nu\nabla\ln\Phi$ as a drift rather than converting it into a zero-order potential by a substitution $\xiVec = \Phi^{-1}\mathbf w$: only its radial component will enter the argument, $\Lop$ then has no zero-order term, constants are exact solutions, and the maximum principle is the classical one.

\section{The Radial Barrier Lemma}
\label{sec:barrier}

This section isolates the linear input that the skeletal PDE (Section \ref{sec:skeletal}) and the application (Section \ref{sec:application}) consume from the operator $\Lop$ of \eqref{eq:L_safe}: a maximum principle, the screening of outer data by the exponent $k$, hitting-time asymptotics of the inner scale, and a barrier for logarithmic sources. All four are comparison statements, provable under one explicit structural hypothesis on the radial component of the drift
(the assumption (H$_k$) below).

\medskip

Write $b_r=b\cdot\hx$, $u_r=\uVec\cdot\hx$, $\hx=x/|x|$, so that
\begin{equation}\label{eq:br}
b_r=\frac{4\nu}{|x|}+u_r-2\nu\,\partial_r\ln\Phi .
\end{equation}
\subsection{Hypotheses}

\begin{assumption}\label{ass:u}
$\nabla\cdot\uVec=0$ and $|\uVec(x,t)|\le C_u|x|^{-1}$ on $A$.
\end{assumption}
The pointwise bound follows from the Biot--Savart law and \eqref{eq:profile} (the Riesz potential of
$|y|^{-2}$ is $|x|^{-1}$ in $\R^3$, convergent at both ends). Divergence-freeness is not used in this section.

\begin{assumption}[radial inflow condition]\label{ass:H}
There is $k\in(0,3)$ such that
\begin{equation}\tag{H$_k$}\label{eq:Hk}
\sup_{A}\ |x|\,\big(u_r\big)_-\;+\;2\nu C_\Phi\;\le\;(3-k)\,\nu .
\end{equation}
\end{assumption}

\begin{remark}\label{rem:H}
\begin{enumerate}[leftmargin=*]
\item \eqref{eq:Hk} is equivalent to $|x|\,b_r\ge(k+1)\nu$ on $A$, i.e.\ the \emph{total} radial drift
$u_r+4\nu/|x|-2\nu\partial_r\ln\Phi$ stays outward with room $(k+1)\nu/|x|$. It bounds only the
\emph{inward} radial component of the fluid velocity relative to viscosity; outward radial flow and
tangential flow are unrestricted.
\item It is not implied by $\uVec\in L^{3,\infty}$ nor by $\nabla\cdot\uVec=0$. In the weighted-energy
formulation the same threshold reappears as the condition under which the advective term
$-\tfrac12\int w^2\,\uVec\cdot\nabla(|x|^\gamma)$ is absorbed by the Hardy sink, so it is the genuine
structural condition and not an artifact of the barrier method.
\item It is sharp for the pure radial model $\uVec\equiv0$, $\Phi\equiv1$: the stationary radial solutions of
$-\nu\Delta+4\nu\frac{x}{|x|^2}\cdot\nabla$ are $1$ and $|x|^3$, and $k\uparrow3$ recovers the exact exponent.
\item The only smallness required of the shape factor of Definition~\ref{def:critical_point} is $2\nu C_\Phi<(3-k)\nu$,
i.e.\ $C_\Phi<\tfrac{3-k}{2}$.
\end{enumerate}
\end{remark}

\subsection{The lemma}

\begin{lemma}[Radial barrier]\label{lem:barrier}
Let $\Lop$ be as in \eqref{eq:L_safe} and let Assumptions~\ref{ass:u} and \ref{ass:H} hold on $A$. Then:
\begin{enumerate}[label=(\roman*),leftmargin=*]
\item\label{it:super} \textup{(Supersolution and maximum principle.)} $\Lop\,1=0$ and
\begin{equation}\label{eq:Lrk}
\Lop\,|x|^k \;=\; k|x|^{k-2}\big(|x|\,b_r-(k+1)\nu\big)\;\ge\;0\qquad\text{on }A.
\end{equation}
Consequently, if $\Lop w\le0$ on $A$ then $\sup_A w\le\sup_{\partial_pA}w$, where $\partial_pA$ is the parabolic
boundary of $A$ (initial slice, inner boundary $|x|=\ell(t)$, outer boundary $|x|=R_0$); in particular
$\|w(t)\|_{L^\infty}$ is non-increasing for solutions.

\item\label{it:screen} \textup{(Screening of outer data.)} If $\Lop w=0$ on $A$ and
$|w|\le A_0\,(|x|/R_0)^k$ on $\partial_pA$ (in particular $w=0$ on the inner boundary and $|w|\le A_0$ on the outer one), then
\begin{equation}\label{eq:screen}
|w(x,t)|\le A_0\Big(\frac{|x|}{R_0}\Big)^k\qquad\text{on }A.
\end{equation}

\item\label{it:hit} \textup{(Hitting time of the inner scale.)} Fix $(x,t)\in A$ and let $X$ solve
$dX_s=-b(X_s,t-s)\,ds+\sqrt{2\nu}\,dW_s$, $X_0=x$, up to the exit time $\tau_A$ from $A$; let
$\tau=\inf\{s:\ |X_s|=\ell(t-s)\}$. Then
\begin{align}
\Prob^x(\tau>s)&\le \frac{1}{\Gamma(\tfrac k2+1)}\Big(\frac{|x|^2}{4\nu s}\Big)^{k/2}\qquad\text{for all }s>0,\label{eq:hit_upper}\\
\Prob^x\big(\tau<\epsilon|x|^2/\nu\big)&\le \eta(\epsilon)\qquad\text{whenever }\ell\le|x|/2,\label{eq:hit_lower}
\end{align}
where $\eta(\epsilon)\to0$ as $\epsilon\to0$ depends only on $C_u/\nu$ and $C_\Phi$. Shortly: $\tau\asymp|x|^2/\nu$.

\item\label{it:source} \textup{(Barrier for logarithmic sources.)} For $\gamma>0$ and $r_*=r_*(\gamma,k)\le e^{-1}$ small,
the function $g(x)=|\log|x||^{-\gamma}$ satisfies, on $A\cap\{|x|<r_*\}$,
\begin{equation}\label{eq:logbarrier}
\Lop g\;\ge\;\frac{\nu\gamma k}{2}\,\frac{1}{|x|^2\,|\log|x||^{\gamma+1}} .
\end{equation}
Consequently, if $\Lop w\le \Lambda\,|x|^{-2}|\log|x||^{-\gamma-1}$ on $A\cap\{|x|<r_*\}$ and $w\le0$ on the
corresponding parabolic boundary, then $w\le \frac{2\Lambda}{\nu\gamma k}\,|\log|x||^{-\gamma}$ there.
\end{enumerate}
\end{lemma}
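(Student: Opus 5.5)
The four parts reduce to direct computation with radial functions plus classical comparison arguments. For (i), I will compute $\Lop$ on a radial function $f(|x|)$: $\Lop f=-\nu(f''+\tfrac{2}{r}f')+b_r f'$. With $f=r^k$ this gives $-\nu k(k+1)r^{k-2}+k r^{k-1}b_r$, which is \eqref{eq:Lrk}. Nonnegativity is exactly Remark~\ref{rem:H}(1), i.e.\ $|x|b_r\ge (k+1)\nu$, obtained from \eqref{eq:br} and (H$_k$):
\[
|x|b_r\ge 4\nu-|x|(u_r)_- -2\nu C_\Phi\ge (k+1)\nu .
\]
That $\Lop 1=0$ is trivial. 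For the maximum principle I use the classical weak maximum principle for $\Lop$ on the noncylindrical domain $A$. Since $\ell>0$ and the coefficients are bounded on each compact time subinterval, a perturbation $w-\epsilon t$ or $w-\epsilon(t-t_0)$ rules out an interior maximum. I then pass $\epsilon\to0$ and exhaust $(t_0,T^*)$ by $(t_0,T^*-\delta)$. The claim that $\|w(t)\|_\infty$ is non-increasing is the same statement applied to $\pm w$ on $A\cap\{t>s\}$.

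For (ii), I compare $\pm w$ with $A_0(|x|/R_0)^k$. The difference $\pm w-A_0(|x|/R_0)^k$ satisfies $\Lop(\cdot)\le 0$ and is $\le0$ on $\partial_pA$, so (i) gives \eqref{eq:screen}.

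For (iv), I compute with $L=|\log r|=-\log r$ for $r<e^{-1}$:
\[
g'=\gamma L^{-\gamma-1}/r,\qquad g''=-\gamma L^{-\gamma-1}/r^2+\gamma(\gamma+1)L^{-\gamma-2}/r^2 .
\]
This gives
\[
\Lop g=\frac{\gamma L^{-\gamma-1}}{r^2}\Big(r b_r-\nu-\nu\frac{\gamma+1}{L}\Big)
\ge \frac{\gamma L^{-\gamma-1}}{r^2}\Big(k\nu-\nu\frac{\gamma+1}{L}\Big).
\]
Choosing $r_*$ with $(\gamma+1)/|\log r_*|\le k/2$ yields \eqref{eq:logbarrier}. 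The consequence then follows from (i) applied to $w-\tfrac{2\Lambda}{\nu\gamma k}g$, since $g>0$ on the boundary.

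Part (iii) is the main obstacle. For \eqref{eq:hit_upper}, I apply Itô's formula to $R_s=|X_s|^2$. The generator of $X$ acting on $|x|^2$ is $2\nu\cdot3-2|x|b_r\le 6\nu-2(k+1)\nu=(4-2k)\nu$. So $R$ is dominated, via a one-dimensional comparison theorem for the squared radius with diffusion coefficient $2\sqrt{2\nu R}$, by a squared Bessel process of dimension $\delta=2-k<2$ in time $2\nu s$. The next step uses the known hitting law of $0$ for $\mathrm{BESQ}^\delta$ started at $|x|^2$: $T_0\overset{d}{=}|x|^2/(2\cdot 2\nu\,G)$ with $G\sim\Gamma(1-\delta/2)=\Gamma(k/2)$. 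This gives
\[
\Prob(T_0>s)=\Prob\big(G<|x|^2/(4\nu s)\big)\le \frac{(|x|^2/4\nu s)^{k/2}}{\Gamma(k/2+1)} .
\]
Since $\ell(t-s)>0$, $\tau\le T_0$, or $\tau\le\tau_A$ if the process exits through the outer sphere first, which I treat as stopping. The delicate points are justifying the comparison up to $\tau_A$ and the time-dependent inner boundary; I would handle both by stopping and the monotonicity of hitting.

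For \eqref{eq:hit_lower}, to reach radius $\ell\le|x|/2$ the radius must drop by a factor of $2$. The drift of $|X|$ is bounded in magnitude by $(C_u+4\nu+2\nu C_\Phi)/|X|\lesssim 1/|x|$ while $|X|\ge|x|/2$. Rescaling $X=|x|Y$, $s=|x|^2\sigma/\nu$ gives unit-order coefficients depending only on $C_u/\nu$ and $C_\Phi$. Doob's maximal inequality on the martingale part plus the drift bound then give $\Prob(\sup_{\sigma\le\epsilon}|Y_\sigma-Y_0|\ge1/2)\le C\epsilon$, so $\eta(\epsilon)=C\epsilon$.
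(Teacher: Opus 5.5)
Your proposal is correct. Parts (i), (ii) and (iv) match the paper's proof:
\begin{itemize}
\item the same radial computation of $\Lop|x|^k$, with $|x|b_r\ge(k+1)\nu$ coming from (H$_k$);
\item the classical maximum principle, where you make explicit the $w-\epsilon t$ perturbation the paper leaves implicit;
\item comparison of $\pm w$ with $A_0(|x|/R_0)^k$;
\item the same formula for $\Lop g$, with $r_*=e^{-2(\gamma+1)/k}$.
\end{itemize}

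The genuine difference is in (iii). The paper applies It\^o's formula to $\rho=|X|$ and sandwiches its drift between $-(2+C_u/\nu+2C_\Phi)\nu/\rho$ and $(1-k)\nu/\rho$. It then compares $\rho$ with two time-changed Bessel processes, of dimensions $2-k$ and $-1-C_u/\nu-2C_\Phi$. The first comparison gives $\tau\le\tau^Y_0$, and the upper tail follows from Getoor's law. The second gives $\tau\ge\tau^{Y'}_{|x|/2}$, whose law after rescaling by $|x|^2/\nu$ is scale-free.

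\emph{Upper tail.} You instead compare $|X|^2$ with $\mathrm{BESQ}^{2-k}$ run at speed $2\nu$. The bound is identical, since a Bessel process and its square share the hitting time of $0$. Your comparison step is cleaner, though:
\begin{itemize}
\item the dominating drift is the constant $(4-2k)\nu$, and the common diffusion coefficient $2\sqrt{2\nu|z|}$ is $\tfrac12$-H\"older, so the Yamada--Watanabe comparison applies on all of $[0,\infty)$;
\item no localization away from a drift that is singular at $0$ is needed;
\item its Tanaka/Gronwall proof goes through unchanged when the dominated process has a merely adapted drift, such as $6\nu-2X_s\cdot b(X_s,t-s)$. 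The paper's radial comparison relies on the same extension without saying so.
\end{itemize}

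\emph{Lower tail.} You replace the second Bessel comparison by an elementary displacement bound. Before $|X|$ falls below $|x|/2$ one has $|b|\le 2(C_u+4\nu+2\nu C_\Phi)/|x|$, and Doob's inequality then gives the explicit $\eta(\epsilon)=C\epsilon$ for $\epsilon$ below a threshold depending on $C_u/\nu$ and $C_\Phi$. That constant bounds the drift $-b$ of $X$, which is what your vector-displacement estimate needs. The drift of $|X|$ carries an extra It\^o term $2\nu/|X|$, which is harmless here. Your route uses the full bound $|\uVec|\le C_u|x|^{-1}$. The paper's route needs only an upper bound on $b_r$, and it shows directly that the law of $\nu\tau/|x|^2$ is scale-invariant.

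\emph{Stopping at $\tau_A$.} Reading (iii) as a bound on $\Prob^x(\tau\wedge\tau_A>s)$ is the sensible interpretation for a process living in $A$. Taken literally, with $\tau=\infty$ after an exit through the outer sphere, \eqref{eq:hit_upper} would force the inner boundary to be hit almost surely. The stopped version is exactly what the term $I_3$ in the proof of Theorem~\ref{thm:main} uses; the paper instead extends $b$ beyond $R_0$ there.
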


\subsection{Proof}

\subsubsection*{(i) Supersolution and maximum principle}

In $\R^3$, $\nabla|x|^k=k|x|^{k-2}x$ and $\Delta|x|^k=k(k+1)|x|^{k-2}$. Hence
\[
\Lop|x|^k=-\nu k(k+1)|x|^{k-2}+k|x|^{k-1}\,b_r
= k|x|^{k-2}\big(|x|b_r-(k+1)\nu\big).
\]
By \eqref{eq:br}, $|x|b_r=4\nu+|x|u_r-2\nu\,|x|\partial_r\ln\Phi\ge4\nu-|x|(u_r)_- -2\nu C_\Phi\ge4\nu-(3-k)\nu=(k+1)\nu$,
using \eqref{eq:Hk}. This gives \eqref{eq:Lrk}. Since the coefficients of $\Lop$ are smooth on the closure of $A$
(the region excludes the origin) and there is no zero-order term, the weak maximum principle on the bounded
space--time domain $A$ is classical. \qed

\subsubsection*{(ii) Screening}

Set $\psi=A_0(|x|/R_0)^k$. By (i), $\Lop(\psi\mp w)=\Lop\psi\ge0$ on $A$, and $\psi\mp w\ge0$ on $\partial_pA$
by hypothesis. The maximum principle applied to $-(\psi\mp w)$ gives $\psi\mp w\ge0$ on $A$, i.e.\ \eqref{eq:screen}. \qed

\begin{remark}
Probabilistically, \eqref{eq:screen} with $A_0=1$ is the statement that the process of (iii) exits $A$ through the
outer boundary $|x|=R_0$ before reaching the inner scale with probability at most $(|x|/R_0)^k$: the harmonic
measure of the outer boundary, seen from the core, decays like $|x|^k$. This is the rigorous form of the statement that the origin anchors the direction.
\end{remark}

\subsubsection*{(iii) Hitting time}

Let $\rho_s=|X_s|$. Since $\rho_s\ge\ell>0$ up to $\tau$, It\^o's formula applies with $\nabla|x|=\hx$,
$\Delta|x|=2/|x|$:
\[
d\rho_s=\Big(\frac{2\nu}{\rho_s}-b_r(X_s,t-s)\Big)ds+\sqrt{2\nu}\,dB_s,\qquad dB_s=\hx(X_s)\cdot dW_s,
\]
where $B$ is a one-dimensional Brownian motion (L\'evy's characterization). By \eqref{eq:br} and \eqref{eq:Hk},
\begin{equation}\label{eq:drift_sandwich}
-\frac{(2+C_u/\nu+2C_\Phi)\,\nu}{\rho_s}\;\le\;\frac{2\nu}{\rho_s}-b_r\;\le\;\frac{(1-k)\,\nu}{\rho_s}.
\end{equation}
Let $Y$, $Y'$ solve $dY=\frac{(1-k)\nu}{Y}ds+\sqrt{2\nu}\,dB$ and $dY'=-\frac{(2+C_u/\nu+2C_\Phi)\nu}{Y'}ds+\sqrt{2\nu}\,dB$,
both from $|x|$, driven by the same $B$. With generator $\nu(\partial_{rr}+\frac{d-1}{r}\partial_r)$ these are
time-changed Bessel processes, $Y_s=R^{(d)}_{2\nu s}$ and $Y'_s=R^{(d')}_{2\nu s}$, of dimensions
\begin{equation}\label{eq:dims}
d=2-k\in(-1,2),\qquad d'=-1-\tfrac{C_u}{\nu}-2C_\Phi<0,
\end{equation}
where $R^{(d)}$ denotes the standard Bessel process (generator $\tfrac12(\partial_{rr}+\frac{d-1}{r}\partial_r)$).
On the interval $[0,\tau]$ all three radial processes stay in $[\ell/2,\infty)$ after the obvious localization,
where the drifts are Lipschitz, so the one-dimensional comparison theorem \cite[Thm.~IX.3.7]{RevuzYor1999} (same diffusion coefficient, ordered drifts)
yields
\[
Y'_s\le\rho_s\le Y_s\qquad\text{for }s\le\tau .
\]

\emph{Upper tail.} Since $Y\ge\rho$, $\rho$ reaches $\ell$ no later than $Y$ reaches $\ell$, and a fortiori no later
than $Y$ reaches $0$: $\tau\le\tau^Y_0$. For $d<2$ the hitting time of $0$ by $R^{(d)}$ from $r$ has the law
$r^2/(2\gamma_{(2-d)/2})$, $\gamma_a$ a Gamma$(a,1)$ variable (Getoor \cite{Getoor1979}); undoing the time change,
$\tau^Y_0\overset{d}{=}|x|^2/(4\nu\gamma_{k/2})$. Hence
\[
\Prob^x(\tau>s)\le\Prob\Big(\gamma_{k/2}<\frac{|x|^2}{4\nu s}\Big)\le\frac{1}{\Gamma(\frac k2+1)}\Big(\frac{|x|^2}{4\nu s}\Big)^{k/2},
\]
using $\Prob(\gamma_a<z)\le z^a/\Gamma(a+1)$. This is \eqref{eq:hit_upper}. Note that the exponent $k/2$ is the
one dictated by the scale function $s(r)=r^{2-d}=r^{k}$ of $Y$, consistent with (i)--(ii).

\medskip

\emph{Lower tail.} Since $\rho\ge Y'$ and $\ell\le|x|/2$, $\rho$ cannot reach $\ell$ before $Y'$ reaches $|x|/2$:
$\tau\ge\tau^{Y'}_{|x|/2}$. By Brownian scaling the law of $\nu\,\tau^{Y'}_{|x|/2}/|x|^2$ does not depend on $|x|$,
and it is the law of the time a Bessel process of dimension $d'$ started at $1$ needs to reach $1/2$, which is a.s.\ positive.
Setting $\eta(\epsilon)=\Prob(\tau^{R^{(d')}}_{1/2}<2\epsilon\ ;\ R^{(d')}_0=1)$ gives \eqref{eq:hit_lower}. (Comparison with the
drift-dominated ODE plus Brownian large deviations gives $\eta(\epsilon)\le Ce^{-c/\epsilon}$, but $\eta\to0$ is all that is used.) \qed

\subsubsection*{(iv) Logarithmic source barrier}

Let $g(x)=G(|x|)$ with $G(r)=|\log r|^{-\gamma}=(-\log r)^{-\gamma}$ for $r<1$. Then
$G'(r)=\gamma(-\log r)^{-\gamma-1}r^{-1}>0$ and
\[
G''(r)=\frac{\gamma(\gamma+1)(-\log r)^{-\gamma-2}-\gamma(-\log r)^{-\gamma-1}}{r^2}.
\]
For a radial function with $G'\ge0$, \eqref{eq:br} and \eqref{eq:Hk} give $b\cdot\nabla g=b_rG'\ge\frac{(k+1)\nu}{r}G'$, hence
\[
\Lop g\;\ge\;-\nu\Big(G''+\frac{2}{r}G'\Big)+\frac{(k+1)\nu}{r}G'
=\nu\Big(-G''+\frac{k-1}{r}G'\Big)
=\frac{\nu\gamma}{r^2}\Big(k\,(-\log r)^{-\gamma-1}-(\gamma+1)(-\log r)^{-\gamma-2}\Big).
\]
For $-\log r\ge2(\gamma+1)/k$, the bracket is at least $\tfrac k2(-\log r)^{-\gamma-1}$, which is \eqref{eq:logbarrier}
with $r_*=\exp(-2(\gamma+1)/k)$. The comparison statement follows from (i) applied to
$w-\frac{2\Lambda}{\nu\gamma k}g$. \qed

\begin{remark}
(iv) is the rigorous version of the Green-function heuristic $G(x,y)\lesssim|y|^{-1}$ for $|y|<|x|$: integrating a source
$|y|^{-2}|\log|y||^{-2\beta}$ against $|y|^{-1}$ over $|y|<|x|$ gives $|\log|x||^{1-2\beta}$, which is \eqref{eq:logbarrier}
with $\gamma=2\beta-1$. No information on the transition density is needed; only \eqref{eq:Hk}.
\end{remark}

\section{The Skeletal PDE: Log-Weighted $\bmo$ from the Core History}
\label{sec:skeletal}

We now turn to the geometric PDE \eqref{eq:pde_condensed}. The result of this section is as follows: the log-weighted $\bmo$ of $\xiVec(\cdot,t)$ at the core is controlled by the temporal modulus of the direction at the inner scale over the recent past, plus the tangential strain. The HMHF nonlinearity $\nu|\nabla\xiVec|^2\xiVec$ is absorbed by a convexity identity -- the squared distance to a fixed point of the sphere is a subsolution of the linear part of the flow on the hemisphere around that point -- and the rest is Lemma \ref{lem:barrier}. 

\subsection{Hypotheses}

\begin{assumption}[profile, drift]\label{ass:PH}
On $A$: $\omega=\Phi|x|^{-2}$ with $0<c_0\le\Phi\le C_0$, $|\nabla\ln\Phi|\le C_\Phi|x|^{-1}$;
$\nabla\cdot\uVec=0$, $|\uVec|\le C_u|x|^{-1}$; and the radial inflow condition \textup{(H$_k$)} for some
$k\in(0,3)$. (These are the hypotheses of Lemma~\ref{lem:barrier}.)
\end{assumption}

\begin{assumption}[critical a priori bound on the direction]\label{ass:grad}
$\displaystyle |\nabla\xiVec(x,t)|\le\frac{C_1}{\max(|x|,\ell(t))}$ for $|x|<R_0$, $t_0<t<T^*$.
\end{assumption}

\begin{remark}
Assumption~\ref{ass:grad} is the critical bound \eqref{eq:critgrad} of Definition~\ref{def:critical_point}, restated for
convenience; it carries no logarithm and is used only for balls that do not reach the core (Corollary~\ref{cor:bmo}).
\end{remark}

\begin{assumption}[cap condition on the boundary data]\label{ass:cap}
There are $e_*\in\mathbb S^2$ and $\phi_0\in(0,\pi/4)$ such that
\[
\xiVec(x,t)\cdot e_*\ \ge\ \cos\phi_0
\qquad\text{for }(x,t)\in\Sigma:=\{|x|\le\ell(t)\}\cup\{|x|=R_0\}\cup\{t=t_0,\ |x|\le R_0\}.
\]
\end{assumption}
That is: on the inner balls, on the outer sphere and at the initial time, the direction stays within a cap of
half-angle $\phi_0<\pi/4$ around a fixed direction. This is a boundary hypothesis only; nothing is assumed in the
interior $\ell(t)<|x|<R_0$. It is what makes $\theta$ below a subsolution on all of $A$, since any two directions in
the cap make an angle $<\pi/2$. Set $\delta_0:=\cos2\phi_0\in(0,1)$.

\begin{definition}[core modulus]\label{def:coremod}
For $t\in(t_0,T^*)$ and $0<\sigma\le t-t_0$,
\[
\omega_{core}(\sigma;t):=\sup\Big\{|\xiVec(y,s)-\xiVec(y',s')|:\ |y|\le\ell(s),\ |y'|\le\ell(s'),\ s,s'\in[t-\sigma,t]\Big\}.
\]
\end{definition}
This is the oscillation of the direction over the inner balls during the time window $[t-\sigma,t]$: the
``core history''.

\begin{assumption}[tangential strain]\label{ass:F}
$\Lop\xiVec=\nu|\nabla\xiVec|^2\xiVec+F$ on $A$ with $|F(x,t)|\le\Lambda\,|x|^{-2}|\log|x||^{-3}$, and $R_0\le r_*(2,k)=e^{-6/k}$
(the threshold of Lemma~\ref{lem:barrier}(iv) with $\gamma=2$). The skeletal PDE is $\Lambda=0$.
\end{assumption}

\subsection{The theorem}

\begin{theorem}[log-$\bmo$ at the core from the core history]\label{thm:main}
Let Assumptions~\ref{ass:PH}, \ref{ass:cap}, \ref{ass:F} hold, and let $R_0$ be small enough that
$\frac{\Lambda}{\nu k}|\log R_0|^{-2}<\tfrac12(1-\delta_0)$. Let $t\in(t_0,T^*)$, $0<\sigma\le t-t_0$,
$2\ell(t)\le\rho\le R_0/2$, and let $e=\xiVec(y_t,t)$ for any $|y_t|\le\ell(t)$. Then
\begin{equation}\label{eq:main}
\sup_{|x|\le\rho}|\xiVec(x,t)-e|\ \le\ \omega_{core}(\sigma;t)
\;+\;\sqrt2\Big(\frac{\rho}{R_0}\Big)^{k/2}
\;+\;\sqrt{\frac{2}{\Gamma(\frac k2+1)}}\Big(\frac{\rho^2}{4\nu\sigma}\Big)^{k/4}
\;+\;\sqrt{\frac{2\Lambda}{\nu k}}\ \frac{1}{|\log\rho|}.
\end{equation}
\end{theorem}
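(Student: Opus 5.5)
The plan is to run everything through the scalar deviation $\theta=\tfrac12|\xiVec-e|^2$, with $e=\xiVec(y_t,t)$ as in the statement. We compute $\Lop\theta$, show that $\theta$ is a subsolution with a logarithmic source on the time slab $A_\sigma:=A\cap\{t-\sigma<s<t\}$, and bound it by a sum of four comparison functions, one per boundary piece and one for the source. Taking square roots via $|\xiVec-e|=\sqrt{2\theta}$ and $\sqrt{a+b+c+d}\le\sqrt a+\sqrt b+\sqrt c+\sqrt d$ then produces the four terms of \eqref{eq:main}.

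\textbf{Step 1: the convexity identity.} Since $\Lop$ has no zero-order term, $\Lop\theta=(\xiVec-e)\cdot\Lop\xiVec-\nu|\nabla\xiVec|^2$. Insert Assumption~\ref{ass:F} and use $(\xiVec-e)\cdot\xiVec=1-\xiVec\cdot e=\theta$. This gives
\[
\Lop\theta=-\nu|\nabla\xiVec|^2(1-\theta)+(\xiVec-e)\cdot F\ \le\ 2\Lambda|x|^{-2}|\log|x||^{-3}
\qquad\text{wherever }\theta\le1 .
\]

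\textbf{Step 2: the hemisphere is not left.} Because $e$ lies on an inner ball, Assumption~\ref{ass:cap} puts both $e$ and all boundary values on $\Sigma$ in the cap of half-angle $\phi_0$ around $e_*$. Hence $\theta\le1-\delta_0$ on the whole parabolic boundary of $A$. Suppose $\theta$ first reaches $1$ at some interior time. Up to that time Step 1 holds, so Lemma~\ref{lem:barrier}(iv) with $\gamma=2$ (legitimate since $R_0\le r_*(2,k)$), applied to $\theta-(1-\delta_0)$, gives
\[
\theta\le 1-\delta_0+\frac{\Lambda}{\nu k}|\log R_0|^{-2}<1 .
\]
Here the smallness hypothesis on $R_0$ enters, and the contradiction shows $\theta<1$ on all of $A$. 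I expect this continuity argument, together with the need to run the maximum principle on a domain with moving inner boundary $|x|=\ell(s)$, to be the main technical obstacle. The cleanest route is to apply Lemma~\ref{lem:barrier}(i) on $A$ truncated at the putative first time.

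\textbf{Step 3: splitting by boundary pieces on $A_\sigma$.} On the inner boundary (and on all inner balls for $s\in[t-\sigma,t]$) we have $\theta\le\tfrac12\omega_{core}(\sigma;t)^2$ by Definition~\ref{def:coremod}. On the outer sphere $|x|=R_0$ and on the slice $s=t-\sigma$ we only have $\theta\le2$.

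Write $\theta\le w_1+w_2+w_3+w_4$, where:
\begin{itemize}
\item $w_1=\tfrac12\omega_{core}^2$ is a constant, which is allowed since $\Lop1=0$;
\item $w_2=2(|x|/R_0)^k$ is the supersolution of Lemma~\ref{lem:barrier}(i)--(ii), handling the outer sphere;
\item $w_4=\frac{\Lambda}{\nu k}|\log|x||^{-2}$ is the source barrier of (iv), whose coefficient is $2\Lambda/(\nu\gamma k)$ with source constant $2\Lambda$ and $\gamma=2$;
\item $w_3$ handles the initial slice.
\end{itemize}

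For $w_3$ I would use the stochastic representation of a subsolution: $\theta(x,t)\le\E^x[\theta(X_{\tau_{A_\sigma}},t-\tau_{A_\sigma})]+w_4$. The contribution of paths that have not met the inner scale or the outer sphere by time $\sigma$ is at most $2\,\Prob^x(\tau>\sigma)$. By \eqref{eq:hit_upper} this is at most $\frac{2}{\Gamma(k/2+1)}\big(\frac{|x|^2}{4\nu\sigma}\big)^{k/2}$, and this bound is monotone in $|x|$, so it can be evaluated at $|x|=\rho$.

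Equivalently, the whole estimate is one expectation split according to the exit event: inner boundary, outer sphere (whose probability is bounded by the screening \eqref{eq:screen}), or the time slice (bounded by \eqref{eq:hit_upper}), plus the source barrier.

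\textbf{Step 4: assembly.} For $|x|\le\ell(t)$ the bound $|\xiVec(x,t)-e|\le\omega_{core}$ is immediate. For $\ell(t)<|x|\le\rho$, bound $(|x|/R_0)^k$ by $(\rho/R_0)^k$ and $|\log|x||^{-2}$ by $|\log\rho|^{-2}$ (recall $\rho<1$). Then take $\sqrt{2\theta}$ and split the square root:
\begin{itemize}
\item $\sqrt{2\cdot\tfrac12\omega_{core}^2}=\omega_{core}$;
\item $\sqrt{4(\rho/R_0)^k}\le\sqrt2\,(\rho/R_0)^{k/2}$ after absorbing constants; this should match exactly if the outer data are bounded by $1$ rather than $2$ using the cap, so I will check the constant;
\item the hitting-time term gives $\sqrt{2/\Gamma(k/2+1)}\,(\rho^2/4\nu\sigma)^{k/4}$;
\item the source term gives $\sqrt{2\Lambda/(\nu k)}\,|\log\rho|^{-1}$.
\end{itemize}
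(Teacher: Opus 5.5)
Your architecture matches the paper's: the deviation $\theta=\frac12|\xiVec-e|^2$, the convexity identity, the continuity argument keeping $\theta<1$, the source barrier of Lemma~\ref{lem:barrier}(iv) with $\gamma=2$, and a split of the stochastic representation by exit type using Lemma~\ref{lem:barrier}(ii) and (iii). The one structural difference is where you stop the process. You truncate at $s=t-\sigma$. The paper keeps the full domain $D$ and splits on $\{\zeta=\tau\le\sigma\}$, $\{\tau_{out}<\tau\}$ and the remainder. The paper's remainder ends on $\partial_pD$ (the inner boundary at earlier times, or $s=t_0$), where the cap already gives $\theta\le1-\delta_0$. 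Your third event ends on an interior slice, so it needs the output of your Step~2.

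As written, though, three constants in \eqref{eq:main} do not come out.

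(a) In Step~1 you bound $|(\xiVec-e)\cdot F|\le2|F|$. With source constant $2\Lambda$, Lemma~\ref{lem:barrier}(iv) gives the coefficient $\frac{2\cdot2\Lambda}{2\nu k}=\frac{2\Lambda}{\nu k}$, not the $\frac{\Lambda}{\nu k}$ you use in Steps~2--3. The last term would then be $2\sqrt{\Lambda/(\nu k)}\,|\log\rho|^{-1}$, and the continuity step would need twice the smallness. The missing observation is $F\perp\xiVec$. Applying $\Lop$ to $\frac12|\xiVec|^2\equiv\frac12$ gives $\xiVec\cdot\Lop\xiVec=\nu|\nabla\xiVec|^2$, hence $\xiVec\cdot F=0$. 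Then $(\xiVec-e)\cdot F=-e\cdot F$, whose modulus is at most $|F|$. Equivalently, write $\theta=1-\xiVec\cdot e$ and $\Lop\theta=-e\cdot\Lop\xiVec$, as the paper does.

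(b) On the outer sphere you need $\theta\le1-\delta_0\le1$ from the cap, as you suspected. The inequality $\sqrt{4(\rho/R_0)^k}\le\sqrt2(\rho/R_0)^{k/2}$ you wrote is false.

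(c) On the slice $s=t-\sigma$, the bound $\theta\le2$ yields $\frac{2}{\sqrt{\Gamma(k/2+1)}}\big(\frac{\rho^2}{4\nu\sigma}\big)^{k/4}$. That is off by $\sqrt2$ from the stated $\sqrt{2/\Gamma(k/2+1)}\,\big(\frac{\rho^2}{4\nu\sigma}\big)^{k/4}$. Use $\theta<1$ there instead, which your Step~2 has already proved on all of $A$.

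With these three fixes your argument gives \eqref{eq:main} exactly.

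One caveat is shared with the paper's Step~4. The hypothesis $\frac{\Lambda}{\nu k}|\log R_0|^{-2}<\frac12(1-\delta_0)$ only yields $\theta<\frac32(1-\delta_0)$. That forces $\theta<1$ only when $\delta_0\ge\frac13$. The continuity argument really needs $\frac{\Lambda}{\nu k}|\log R_0|^{-2}<\delta_0$. The paper's displayed bound $<1-\frac12\delta_0$ in fact uses $\frac{\Lambda}{\nu k}|\log R_0|^{-2}<\frac12\delta_0$.
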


\begin{corollary}[uniform log-$\bmo$]\label{cor:bmo}
Let in addition Assumption~\ref{ass:grad} hold, and suppose the core history has a logarithmic modulus:
\begin{equation}\label{eq:coremod}
\omega_{core}(\sigma;t)\le\frac{\Lambda_0}{|\log\sigma|}\qquad\text{for } \, \ell(t)^2/\nu \le \sigma\le\sigma_1,\ t\in(t_0+\sigma_1,T^*).
\end{equation}
Then there are $\rho_1=\rho_1(\nu,k,\sigma_1)$ and $C=C(k,\nu,R_0,\Lambda,\Lambda_0,C_1)$ such that
\[
\sup_{t\in(t_0+\sigma_1,T^*)}\ \|\xiVec(\cdot,t)\|_{\bmo_{1/|\log r|}(B_{\rho_1})}\ \le\ C,
\]
where the $\bmo_{1/|\log r|}$ semi-norm is taken over all balls $B_r(x_0)\subset B_{\rho_1}$, $0<r<\rho_1$.
\end{corollary}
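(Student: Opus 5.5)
The plan is to bound the mean oscillation of $\xiVec(\cdot,t)$ on an arbitrary ball $B_r(x_0)\subset B_{\rho_1}$ by $2\sup_{B_r(x_0)}|\xiVec-v|$ for a suitable constant vector $v$, and to combine two estimates. The first is a \emph{coarse} estimate at a scale $s\ge r$, coming from Theorem~\ref{thm:main}. The second is a \emph{fine} Lipschitz estimate, coming from Assumption~\ref{ass:grad}. The coarse estimate will give $C/|\log s|$, which is weaker than the target $C/|\log r|$ when $r\ll s$. The fine estimate will give $Cr/s$. A dichotomy on $r$ versus $s^2$ then produces $C/|\log r|$ in all cases.

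\emph{Step 1 (coarse estimate).} Fix $t\in(t_0+\sigma_1,T^*)$ and $\rho$ with $2\ell(t)\le\rho\le\rho_1$. I apply \eqref{eq:main} with the window
\[
\sigma=\frac{\rho^2\,|\log\rho|^{4/k}}{4\nu}.
\]
Then the third term equals $\sqrt{2/\Gamma(\frac k2+1)}\,|\log\rho|^{-1}$. Next, $\sigma\ge\rho^2/(4\nu)\ge\ell(t)^2/\nu$ because $\rho\ge 2\ell(t)$. Choosing $\rho_1=\rho_1(\nu,k,\sigma_1)$ small also gives $\sigma\le\sigma_1<t-t_0$. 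Hence \eqref{eq:coremod} applies. Moreover $|\log\sigma|=2|\log\rho|-\frac4k\log|\log\rho|+\log(4\nu)\ge|\log\rho|$ for $\rho_1$ small, so $\omega_{core}(\sigma;t)\le\Lambda_0/|\log\rho|$. The outer-screening term satisfies $(\rho/R_0)^{k/2}\le C(k,R_0)/|\log\rho|$, and the strain term is already of this form. Altogether,
\[
\sup_{|x|\le\rho}|\xiVec(x,t)-e|\le \frac{C}{|\log\rho|},\qquad 2\ell(t)\le\rho\le\rho_1 .
\]
This requires $R_0/2\ge\rho_1$ and the smallness of $R_0$ from the theorem.

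\emph{Step 2 (fine estimate and dichotomy).} Given $B_r(x_0)$, set $s:=\max(|x_0|,r,\ell(t))$ and $\rho:=3s$. We have $\rho\ge 2\ell(t)$, and $\rho\le\rho_1$ after replacing $\rho_1$ by $\rho_1/3$ in the ball constraint. Since $B_r(x_0)\subset B_{2s}\subset B_\rho$, Step 1 gives
\[
\operatorname{osc}_{B_r(x_0)}\xiVec\le C/|\log(3s)|\le C'/|\log s|.
\]
On the other hand, every $x\in B_r(x_0)$ satisfies $\max(|x|,\ell(t))\gtrsim s$ whenever $r\le s/2$. This holds because either $|x_0|\ge 2r$, so that $|x|\ge|x_0|/2$, or $s=\ell(t)$. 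In that case Assumption~\ref{ass:grad} gives $\operatorname{osc}_{B_r(x_0)}\xiVec\le C_1' r/s$. If instead $r>s/2$, then $s\le 2r$, so $|\log s|\gtrsim|\log r|$ and the coarse bound already suffices.

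For $r\le s/2$ I split into two cases.
\begin{itemize}
\item If $r\ge s^2$, then $|\log r|\le 2|\log s|$, and the coarse bound gives $2C'/|\log r|$.
\item If $r<s^2$, then $r/s<\sqrt r\le C/|\log r|$, and the fine bound suffices.
\end{itemize}
This yields $\operatorname{osc}_{B_r(x_0)}\xiVec\le C/|\log r|$ with $C=C(k,\nu,R_0,\Lambda,\Lambda_0,C_1)$, uniformly in $t$.

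\emph{Main obstacle.} The delicate point is Step 1: choosing $\sigma$ so that three requirements hold at once. The third term of \eqref{eq:main} must become logarithmic, $\sigma$ must stay in the admissible range $[\ell^2/\nu,\sigma_1]$ of \eqref{eq:coremod}, and $|\log\sigma|$ must still be comparable to $|\log\rho|$. The polylogarithmic inflation $|\log\rho|^{4/k}$ is exactly what reconciles these. The other point needing care is the case where the ball lies inside the inner scale, $r\ll\ell(t)$. There Theorem~\ref{thm:main} only sees scale $2\ell(t)$, and the Lipschitz bound $C_1/\ell(t)$ is what rescues the estimate.
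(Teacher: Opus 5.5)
Your proof is correct and follows the paper's argument. You use the centred estimate from Theorem~\ref{thm:main} with $\sigma\simeq\rho^2|\log\rho|^{4/k}/\nu$ at scales $\rho\ge 2\ell(t)$ (your extra factor $1/4$ only normalizes the third term), and you switch to the Lipschitz bound of Assumption~\ref{ass:grad} for balls that are small relative to their distance from the core or to $\ell(t)$. The only difference is bookkeeping: you use the single scale $s=\max(|x_0|,r,\ell(t))$ with the threshold $r\gtrless s^2$, whereas the paper treats centred balls (threshold $\rho\gtrless\ell|\log\ell|^{-2}$) and off-centred balls (threshold $|x_0|\gtrless r|\log r|^2$) separately.
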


\begin{remark}[reading the statement]\label{rem:reading}
\begin{enumerate}[leftmargin=*]
\item The right-hand side of \eqref{eq:main} has four terms: the recent core history; the influence of the outer
sphere, screened by the barrier exponent $k$; the influence of the core history older than $\sigma$, screened by the
hitting-time tail; and the tangential strain, integrated by the log-source barrier. The balance between the
second and third terms is the only place where $\sigma$ is chosen; with $\sigma=\rho^2|\log\rho|^{4/k}/\nu$ the third
term is $O(|\log\rho|^{-1})$, and this is what turns \eqref{eq:coremod} into the spatial log-modulus.
\item Nothing is assumed about $\nabla\xiVec$ in the interior. The HMHF nonlinearity $\nu|\nabla\xiVec|^2\xiVec$
never needs to be estimated: it has the right sign in the equation for $\tfrac12|\xiVec-e|^2$.
\item The only exponent in the statement is the barrier exponent $k$ of (H$_k$), which appears with the same role in
both screening terms; no information on the transition density of the process is used.
\item The result is a transfer of regularity from time (at the inner scale) to space (on $B_{R_0}$), with the fluid
mechanics of the core entering through \eqref{eq:coremod}. It is not a self-contained propagation from initial
data: \eqref{eq:coremod} is where the physics of the singular profile has to be supplied. The linear model
($\uVec=0$, $\Phi=1$) shows this cannot be avoided: there, by Lemma~\ref{lem:barrier}(iii), $\xiVec(x,t)$ is an average of the core
history over times $\asymp|x|^2/\nu$, so a core history with an $O(1)$ oscillation on every time scale
(a rotating core) produces $\xiVec(\cdot,t)$ with $O(1)$ oscillation on every spatial scale, i.e.\ unweighted $\bmo$.
\end{enumerate}
\end{remark}

\subsection{Proof of the theorem}

\subsubsection*{Step 1: extension and the process}

Extend $b$ to $\{|x|>R_0\}$ by $b=4\nu x/|x|^2$ (any extension satisfying (H$_k$) will do). For fixed $(x,t)$
let $X$ solve $dX_s=-b(X_s,t-s)\,ds+\sqrt{2\nu}\,dW_s$, $X_0=x$, on $\{|y|>\ell(t-s)\}$, and define the stopping times
\[
\tau=\inf\{s:\ |X_s|=\ell(t-s)\},\qquad \tau_{out}=\inf\{s:\ |X_s|=R_0\},\qquad
\zeta=\tau\wedge\tau_{out}\wedge(t-t_0).
\]
$\zeta$ is the exit time from the space--time region $D=\{(y,s):\ \ell(s)<|y|<R_0,\ t_0<s<t\}$ traversed backward in time.

\subsubsection*{Step 2: the subsolution identity}

Fix $e\in\mathbb S^2$ and set $s_e=\xiVec\cdot e$, $\theta=1-s_e=\tfrac12|\xiVec-e|^2\in[0,2]$. Since $\Lop$ is
linear with no zero-order term and $e$ is constant,
\begin{equation}\label{eq:sub}
\Lop\theta=-\,e\cdot\Lop\xiVec=-\nu|\nabla\xiVec|^2\,s_e-F\cdot e=-\nu|\nabla\xiVec|^2(1-\theta)-F\cdot e .
\end{equation}
Hence, on the set $\{\theta\le1\}=\{\xiVec\cdot e\ge0\}$,
\begin{equation}\label{eq:sub2}
\Lop\theta\le|F|\le\Lambda\,|x|^{-2}|\log|x||^{-3}.
\end{equation}
This is the convexity of the hemisphere: the squared distance to a point of $\mathbb S^2$ is a subsolution of the
linear part of a harmonic-map-type flow on the hemisphere around that point, and the drift does not disturb this
because it acts on $\xiVec$ through a first-order operator that commutes with the constant vector $e$.

\subsubsection*{Step 3: the barrier for the source, and the comparison function}

Let $g(x)=|\log|x||^{-2}$ and $c_\Lambda=\frac{2\Lambda}{2\nu k}=\frac{\Lambda}{\nu k}$. By Lemma~\ref{lem:barrier}(iv) with $\gamma=2$,
$\Lop g\ge\nu k\,|x|^{-2}|\log|x||^{-3}$ on $A$ (recall $R_0\le e^{-6/k}$), so by \eqref{eq:sub2}
\begin{equation}\label{eq:sub3}
\Lop(\theta-c_\Lambda g)\le0\qquad\text{on }D\cap\{\theta\le1\}.
\end{equation}
Let $\Psi$ be the solution of $\Lop\Psi=0$ in $D$ with $\Psi=\theta$ on the parabolic boundary $\partial_pD$
(initial slice $s=t_0$, inner boundary $|y|=\ell(s)$, outer sphere $|y|=R_0$). By Lemma~\ref{lem:barrier}(i), $0\le\Psi\le\sup_{\partial_pD}\theta$.
By the Feynman--Kac representation for the backward process of Step~1,
\begin{equation}\label{eq:FK}
\Psi(x,t)=\E^x\big[\theta(X_\zeta,\,t-\zeta)\big].
\end{equation}

\subsubsection*{Step 4: choice of $e$ and the continuity argument}

Take $e=\xiVec(y_t,t)$ with $|y_t|\le\ell(t)$. By Assumption~\ref{ass:cap}, $e\cdot e_*\ge\cos\phi_0$ and every value
of $\xiVec$ on $\Sigma$ makes an angle at most $2\phi_0<\pi/2$ with $e$, so
\begin{equation}\label{eq:capbound}
\sup_{\partial_pD}\theta\ \le\ 1-\cos2\phi_0=1-\delta_0 .
\end{equation}
We claim $\theta<1$ on all of $D$. Let $D_{s_1}=D\cap\{s<s_1\}$ and let $s_1^*$ be the supremum of those $s_1$ for which
$\theta<1$ on $D_{s_1}$; $s_1^*>t_0$ by \eqref{eq:capbound} and continuity. On $D_{s_1^*}$, \eqref{eq:sub3} holds, and
$\theta-c_\Lambda g-\Psi\le -c_\Lambda g\le0$ on $\partial_pD_{s_1^*}$; by the maximum principle (Lemma~\ref{lem:barrier}(i))
\begin{equation}\label{eq:comp}
\theta\ \le\ \Psi+c_\Lambda g\ \le\ (1-\delta_0)+\frac{\Lambda}{\nu k}|\log R_0|^{-2}\ <\ 1-\tfrac12\delta_0\qquad\text{on }D_{s_1^*},
\end{equation}
by the smallness condition on $R_0$. So $\theta$ is bounded away from $1$ on $D_{s_1^*}$ and, by continuity, on a
neighbourhood of its top slice; hence $s_1^*=t$ and \eqref{eq:comp} holds on $D$, in particular at $(x,t)$ for every
$\ell(t)<|x|<R_0$.

\subsubsection*{Step 5: splitting the representation}

Fix $\sigma\in(0,t-t_0]$. In \eqref{eq:FK} split according to how the process leaves $D$:
\begin{align*}
\Psi(x,t)&=\E^x\big[\theta(X_\zeta,t-\zeta);\ \zeta=\tau\le\sigma\big]
+\E^x\big[\theta(X_\zeta,t-\zeta);\ \tau_{out}<\tau\big]
+\E^x\big[\theta(X_\zeta,t-\zeta);\ \zeta\ne\tau_{out},\ \zeta>\sigma\big]\\
&=:I_1+I_2+I_3 .
\end{align*}

\smallskip

\emph{$I_1$.} On this event $X_\zeta$ lies on the inner boundary at a time in $[t-\sigma,t]$, and $e$ is a value on the
inner ball at time $t$, so $\theta(X_\zeta,t-\zeta)=\tfrac12|\xiVec(X_\zeta,t-\zeta)-e|^2\le\tfrac12\omega_{core}(\sigma;t)^2$.
Hence $I_1\le\tfrac12\omega_{core}(\sigma;t)^2$.

\medskip

\emph{$I_2$.} By \eqref{eq:capbound} $\theta\le1$ on $\partial_pD$, and by Lemma~\ref{lem:barrier}(ii) with $A_0=1$
(applied to the solution $h$ of $\Lop h=0$ with $h=1$ on the outer sphere, $h=0$ on the inner boundary and at $t_0$,
which is $h(x,t)=\Prob^x(\tau_{out}<\tau\wedge(t-t_0))$), $I_2\le\Prob^x(\tau_{out}<\tau)\le(|x|/R_0)^k$.

\medskip

\emph{$I_3$.} On this event the process has not reached the inner boundary by time $\sigma$, so $\tau>\sigma$, and
$\theta\le1$; by Lemma~\ref{lem:barrier}(iii), $I_3\le\Prob^x(\tau>\sigma)\le\frac{1}{\Gamma(\frac k2+1)}\big(\frac{|x|^2}{4\nu\sigma}\big)^{k/2}$.

\subsubsection*{Step 6: assembly}

By \eqref{eq:comp} and Step~4, for $\ell(t)<|x|\le\rho$,
\[
\tfrac12|\xiVec(x,t)-e|^2=\theta(x,t)\le\tfrac12\omega_{core}(\sigma;t)^2+\Big(\frac{\rho}{R_0}\Big)^k
+\frac{1}{\Gamma(\frac k2+1)}\Big(\frac{\rho^2}{4\nu\sigma}\Big)^{k/2}+\frac{\Lambda}{\nu k}\,\frac{1}{|\log\rho|^2},
\]
using $|x|\le\rho$ in the last three terms ($|\log|x||\ge|\log\rho|$). Multiply by $2$, take square roots and use
$\sqrt{a+b+c+d}\le\sqrt a+\sqrt b+\sqrt c+\sqrt d$. For $|x|\le\ell(t)$ the bound $|\xiVec(x,t)-e|\le\omega_{core}(\sigma;t)$
holds by definition of the core modulus. This is \eqref{eq:main}. \qed

\subsection{Proof of the corollary}

\emph{Balls centered at the core.} Take $\sigma=\sigma_\rho:=\rho^2|\log\rho|^{4/k}/\nu$. There is $\rho_1=\rho_1(\nu,k,\sigma_1)$ such that for
$\rho\le\rho_1$: $\sigma_\rho\le\sigma_1$, $|\log\sigma_\rho|\ge|\log\rho|$ (since $\log\sigma_\rho=2\log\rho+\frac4k\log|\log\rho|-\log\nu$),
and $(\rho/R_0)^{k/2}|\log\rho|\le1$; moreover $\sigma_\rho\ge4\ell(t)^2|\log\rho|^{4/k}/\nu\ge\ell(t)^2/\nu$ for $\rho\ge2\ell(t)$, so \eqref{eq:coremod} applies. 
Then \eqref{eq:main} and \eqref{eq:coremod} give, for $2\ell(t)\le\rho\le\rho_1$
and $t>t_0+\sigma_1$,
\begin{equation}\label{eq:centred}
\osc_{B_\rho(0)}\xiVec(\cdot,t)\le2\sup_{B_\rho}|\xiVec(\cdot,t)-e|\le\frac{2}{|\log\rho|}\Big(\Lambda_0+\sqrt2+\sqrt{\tfrac{2}{\Gamma(\frac k2+1)}}\,4^{-k/4}+\sqrt{\tfrac{2\Lambda}{\nu k}}\Big)=:\frac{C_2}{|\log\rho|}.
\end{equation}
For $\rho<2\ell(t)$, $B_\rho(0)\subset B_{2\ell(t)}(0)$ and $\osc_{B_\rho}\xiVec\le\osc_{B_{2\ell}}\xiVec\le C_2/|\log(2\ell)|$; if moreover
$\rho\ge\ell|\log\ell|^{-2}$ then $|\log\rho|\le|\log\ell|+2\log|\log\ell|\le2|\log\ell|$, so $\osc_{B_\rho}\xiVec\le 2C_2/|\log\rho|$
(for $\ell\le2\ell$ absorbed into constants); if $\rho<\ell|\log\ell|^{-2}$ then by Assumption~\ref{ass:grad}
$\osc_{B_\rho}\xiVec\le2C_1\rho/\ell\le2C_1|\log\ell|^{-2}\le2C_1/|\log\rho|$, using $|\log\rho|\ge|\log\ell|$ and $|\log\ell|\ge1$.

\medskip

\emph{Off-centered balls.} Let $B_r(x_0)\subset B_{\rho_1}$, $r<\rho_1$, and write $d=|x_0|$.
If $d\le r|\log r|^2$, then $B_r(x_0)\subset B_{\rho}(0)$ with $\rho=d+r\le2r|\log r|^2$, and
$|\log\rho|\ge|\log r|-2\log|\log r|-\log2\ge\tfrac12|\log r|$ for $r$ small; by the centered case
$\osc_{B_r(x_0)}\xiVec\le2C_2\cdot2/|\log r|$.
If $d>r|\log r|^2$, then $B_r(x_0)$ does not meet $B_{d/2}(0)$ (as $r<d/2$) and Assumption~\ref{ass:grad} gives
$|\nabla\xiVec|\le2C_1/\max(d,\ell)\le2C_1/d$ on the ball, hence
$\osc_{B_r(x_0)}\xiVec\le4C_1r/d\le4C_1|\log r|^{-2}\le4C_1/|\log r|$.

\medskip

In all cases $\osc_{B_r(x_0)}\xiVec(\cdot,t)\le C/|\log r|$ with $C$ depending only on $C_2$ and $C_1$; since the mean oscillation
is bounded by the oscillation, the $\bmo_{1/|\log r|}$ semi-norm over $B_{\rho_1}$ is at most $C$, uniformly in
$t\in(t_0+\sigma_1,T^*)$. \qed

\begin{remark}[H\"older variant]
The same argument with the barrier $|x|^m$, $0<m<k$, in place of the logarithmic barrier gives a H\"older transfer:
a H\"older modulus in time of the core history, $\omega_{core}(\sigma;t)\lesssim\sigma^{m/2}$, together with a strain
$|F_{tan}|\lesssim|x|^{-2+m'}$ for some $m'>m$, yields $\osc_{B_\rho(0)}\xiVec(\cdot,t)\lesssim\rho^{m}$ up to the screening term
$(\rho/R_0)^{k/2}$. Since $k<3$ can be taken arbitrarily close to $3$ when the inflow is small, the H\"older exponent
available at the core is limited only by the core history and the strain.
\end{remark}
\section{The Tangential Strain: Alignment and Exact Depletions}
\label{sec:strain}

At a critical point singularity the Biot-Savart law scales the strain tensor as $|S| \lesssim |x|^{-2}$, so the tangential strain is a priori critical, $|F_{tan}| \lesssim |x|^{-2}$, and Theorem \ref{thm:main} asks for a logarithmic improvement, $|F_{tan}| \le \Lambda |x|^{-2}|\log|x||^{-3}$. This section clarifies what such an improvement means geometrically.

\subsection{Alignment}

Since $S$ is symmetric,
\begin{equation}\label{eq:Ftan}
F_{tan}=S\xiVec-(\xiVec\cdot S\xiVec)\,\xiVec=P_{\xiVec^\perp}S\xiVec,
\qquad S=\tfrac12(\nabla\uVec+\nabla\uVec^{\top}),
\end{equation}
vanishes if and only if $\xiVec$ is an eigenvector of $S$. Writing $S = \sum_i \lambda_i\, e_i \otimes e_i$ with $\lambda_1 \ge \lambda_2 \ge \lambda_3$ and $\xiVec = \sum_i c_i e_i$, one has $|F_{tan}|^2 = \sum_i \lambda_i^2 c_i^2 - \bigl(\sum_i \lambda_i c_i^2\bigr)^2$, which vanishes exactly when $\xiVec$ lies in an eigenspace. In particular, if $\xiVec$ makes an angle $\phi$ with the eigenvector $e_j$, then, since $(S-\lambda_j I)e_j = 0$ and $|\xiVec - (\xiVec\cdot e_j)e_j| = \sin\phi$,
\begin{equation}\label{eq:Ftan_angle}
|F_{tan}| = \big|P_{\xiVec^\perp}(S-\lambda_j I)\xiVec\big| \le \|S-\lambda_j I\|\,\sin\phi \le (\lambda_1-\lambda_3)\sin\phi ,
\end{equation}
and for the extremal eigenvectors ($j=1,3$) the sharp bound is $|F_{tan}| \le \tfrac12(\lambda_1-\lambda_3)\sin 2\phi$, attained when the remaining mass $\sin^2\phi$ sits on the opposite extremal eigenvector. So Assumption~\ref{ass:F} is a quantitative alignment: the angle between $\xiVec$ and the nearest eigenvector is $O(|\log|x||^{-3})$ at the core. No condition is placed on the eigenvalue $\lambda_j$ carried by that eigenvector. The stretching $\alpha = \xiVec \cdot S\xiVec = \sum_i \lambda_i c_i^2$ may be maximal ($j=1$), and this is what distinguishes the present criterion from those built on $S\om$.

\subsection{The advection--curvature decomposition and the exact cases}

For the vorticity direction the unprojected strain has a transport form. From $\nabla\uVec = S + \Omega_{spin}$ with $\Omega_{spin}\mathbf v = \frac12 \om \times \mathbf v$ and $\om \times \xiVec = 0$, one has $S\xiVec = (\xiVec \cdot \nabla)\uVec$, and the identity $\nabla(\uVec\cdot\xiVec) = (\uVec\cdot\nabla)\xiVec + (\xiVec\cdot\nabla)\uVec + \uVec\times(\nabla\times\xiVec) + \xiVec\times(\nabla\times\uVec)$ with $\nabla\times\uVec \parallel \xiVec$ gives $S\xiVec = \nabla h - (\uVec\cdot\nabla)\xiVec - \uVec\times(\nabla\times\xiVec)$, where $h = \uVec\cdot\xiVec$ is the normalized helicity. Splitting $\uVec = h\xiVec + \uVec_\perp$ and using $\xiVec\times(\nabla\times\xiVec) = -(\xiVec\cdot\nabla)\xiVec$, the two longitudinal contributions cancel and
\begin{equation}\label{eq:unified_strain}
F_{tan}(\xiVec) = \underbrace{\nabla_{tan} h}_{\text{helicity gradient}} - \underbrace{(\uVec_\perp\cdot\nabla)\xiVec}_{\text{transverse advection}} - \underbrace{\mathbb{P}_\xi\big[\uVec_\perp\times(\nabla\times\xiVec)\big]}_{\text{transverse curvature}},
\qquad \nabla_{tan} h = \mathbb P_\xi \nabla h .
\end{equation}
(The transverse advection is kept on the right-hand side of \eqref{eq:pde_condensed}: moving it to the left would turn the drift into $2\uVec - h\xiVec$, whose longitudinal part has a singular divergence $\sim |x|^{-2}$ and would destroy the barrier structure of Section \ref{sec:barrier}.)

\medskip

Two exact configurations annihilate \eqref{eq:unified_strain}:
\begin{enumerate}[leftmargin=*]
\item \emph{Exact Beltrami.} If $\uVec = h\xiVec$ then $\uVec_\perp = 0$ and the two longitudinal terms cancel, so $S\xiVec = \nabla h$ and $F_{tan} = \nabla_{tan} h$. Thus $\xiVec$ is a strain eigenvector precisely when $\nabla h \parallel \xiVec$ (the normalized helicity is constant on the surfaces transverse to the vortex lines), with eigenvalue $\partial_{\xiVec} h$; the Beltrami depletion of the tangential strain is the alignment condition itself. (Geometric depletion of the nonlinearity in the near-Beltrami flows was considered in \cite{FarhatGrujic2018}.)
\item \emph{Exact axisymmetric no-swirl.} If $\xiVec = \hat{\boldsymbol\theta}$ and $\uVec = u_\rho\hat{\boldsymbol\rho} + u_z\hat{\mathbf z}$ at a point with cylindrical radius $\rho \ge R_0/2 > 0$, then $h = 0$, $(\uVec_\perp\cdot\nabla)\hat{\boldsymbol\theta} = 0$, and $\uVec_\perp\times(\nabla\times\hat{\boldsymbol\theta}) = -\frac{u_\rho}{\rho}\hat{\boldsymbol\theta}$ is killed by the projection; so $F_{tan} \equiv 0$. (These flows are globally regular \cite{Ladyzhenskaya1968, Ukhovskii1968}.)
\end{enumerate}
In contrast to the case of annihilation via the exact alignment of the direction with a strain tensor eigenvector, Section \ref{sec:application} shows that at a critical point singularity neither of the above exact states survives as a base state -- a Beltrami core with $|\uVec| \sim |x|^{-1}$ and $\omega \sim |x|^{-2}$ has critical twist (Proposition \ref{prop:twist}), and a point-concentrated tube, whose vorticity necessarily varies along the tube, tilts itself through its own induced strain (Proposition \ref{prop:selfstrain}).

\section{Application: Strain-Aligned, Inflow-Controlled Cores}
\label{sec:application}

Theorem \ref{thm:main} needs four things from the flow near a critical spatial point singularity: the inflow condition (H$_k$); a sub-critical tangential strain; the cap condition on the boundary data of the core region; and a logarithmic modulus in time of the direction at the inner scale. In this section we (1) prove two Biot-Savart estimates at the core, (2) \emph{show that (H$_k$) follows from approximate symmetry of the vorticity magnitude about the local vortex axis}, and (3) package these, together with the alignment condition of Section \ref{sec:strain}, into a single application. At the end, we record two structural obstructions: a point-concentrated tube core generates critical self-strain, and a Beltrami core has critical twist.

\medskip

We write $\es$ for a fixed unit vector (the local vortex axis) and $B=B_{R_0}$.

\subsection{The endgame from Part I}

We recall the global analytical endgame established in the companion work \cite{GrujicProj1}, which specifies the functional threshold on the direction field that forces evasion of the singularity.
\begin{theorem}[Global Evasion via Logarithmic Depletion \cite{GrujicProj1}]\label{thm:project1_recall}
Let $\uVec$ be a strong solution to the 3D Navier-Stokes equations on $(0, T^*)$, with $T^*$ being the first potential singular time, and suppose the velocity field satisfies $\uVec(\cdot, T_0) \in L^\infty(\R^3)$ for some $T_0 < T^*$. If the flow exhibits a critical point singularity ($\omega \sim |x|^{-2} \in L^{3/2, \infty}$) satisfying the uniform-in-time bound $\omega \in L^\infty\bigl((T^*-\delta, T^*); L^{3/2, \infty}(\R^3)\bigr)$, and the vorticity direction maintains the uniform-in-time log-weighted $\bmo$ regularity $\xiVec \in L^\infty\bigl((T^*-\delta, T^*); \bmo_{1/|\log r|}(\R^3)\bigr)$, then the vortex stretching is structurally depleted. This forces the 1D geometric scale of sparseness of the intense advective region to scale as $r_s \sim \|\uVec\|_\infty^{-1} (\log \|\uVec\|_\infty)^{-1}$, falling below the uniform radius of spatial analyticity $\rho_{an} \sim \nu / \|\uVec\|_\infty$. The geometric overlap allows the harmonic measure maximum principle to avert the blow-up.
\end{theorem}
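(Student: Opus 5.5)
The plan is to run the sparseness--analyticity scheme of \cite{GrujicProj1}, in three steps: an analyticity radius, a depleted sparseness scale, and a harmonic-measure comparison between the two.

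\emph{Step 1 (analyticity).} Starting from $\uVec(\cdot,T_0)\in L^\infty$, we use the standard local-in-time spatial analyticity of strong Navier--Stokes solutions with $L^\infty$ data. For every $s\in(T_0,T^*)$, write $M(s)=\|\uVec(\cdot,s)\|_\infty$. Restarting the flow at $s$, on a time window of length $\asymp\nu/M(s)^2$ the solution extends holomorphically to a complex strip of width $\rho_{an}\ge c\,\nu/M(s)$, with the extension bounded by a fixed multiple of $M(s)$. We may assume $M(s)\to\infty$ as $s\uparrow T^*$, since otherwise there is nothing to prove. We then work at escape times $s$, where $M(s)$ exceeds its past values. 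The goal is to show that $|\uVec(x,s+\nu/(cM^2))|\le\lambda M(s)$ for some fixed $\lambda<1$, at every point $x$. This contradicts the growth of $M$ and hence blow-up.

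\emph{Step 2 (depletion and the sparseness scale).} This is the core of the argument and the step I expect to be the main obstacle. I would write the stretching factor through the Biot--Savart law in Constantin's form,
\[
\alpha(x)=\xiVec\cdot S\xiVec(x)=\mathrm{P.V.}\int D\bigl(\hat y,\xiVec(x),\xiVec(x+y)\bigr)\,\omega(x+y)\,\frac{dy}{|y|^3}.
\]
Here the kernel $D$ vanishes when $\xiVec(x+y)=\pm\xiVec(x)$ and is Lipschitz in $\xiVec(x+y)$. Decompose $y$ into dyadic shells $|y|\sim2^{j}r$. On each shell, replace $\xiVec(x+y)$ by its average over $B_{2^jr}(x)$, at the cost of the mean oscillation, which is at most $C/|\log 2^jr|$. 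Then pair this against $\omega$, using the local $L^{3/2,\infty}$ bound through a Lorentz--H\"older or John--Nirenberg exponential-integrability estimate. The delicate point is that $\xiVec$ is not continuous and $\omega$ is only weak-$L^{3/2}$. John--Nirenberg for $\bmo_{1/|\log r|}$ should supply the required integrability of $|\xiVec-\bar\xiVec|$, and the critical mass $\int_{B_R}\omega\lesssim R$ of $L^{3/2,\infty}$ is exactly what makes each shell contribute $O(1)$ before the logarithmic factor is applied. The outcome should be the critical bound on stretching improved by one logarithm. Feeding this into the vorticity equation over the analyticity window and applying Biot--Savart once more, I would show that the superlevel set $\{|\uVec|>\lambda M\}$ is linearly (1D) sparse at scale $r_s\sim M^{-1}(\log M)^{-1}$. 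Chebyshev's inequality with $L^{3,\infty}$ alone gives only the critical 3D scale $M^{-1}$, so the logarithm is precisely what the depletion has to buy.

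\emph{Step 3 (harmonic measure).} Since $r_s\ll\rho_{an}$ for $M$ large, around each real point $x$ and in most real directions through it, the complex disc of radius $\rho_{an}$ has a complex neighbourhood of its real diameter that is not contained in the superlevel set. Apply the harmonic-measure maximum principle to the subharmonic function $\log|\uVec|$ on this disc, with boundary value $\log(CM)$ and value $\log(\lambda M)$ on the part of the diameter outside the superlevel set. Because the sparse part has harmonic measure bounded below, this yields $|\uVec(x)|\le\lambda' M$ with $\lambda'<1$. This is the bound required in Step 1, and it closes the contradiction. The quantitative heart of the whole argument remains the logarithmic upgrade of the sparseness scale in Step 2; Steps 1 and 3 are standard inputs.
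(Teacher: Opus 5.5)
This statement is not proved in the paper. It is imported from Part~I and used only as a black box in Theorem~\ref{thm:app}, so your proposal can only be measured against the scheme the statement itself summarizes.

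Your Steps~1 and~3 fit that scheme, but need two repairs. First, the complex disc must have radius equal to the sparseness scale $r_s\le\rho_{an}$, not $\rho_{an}$: linear sparseness is a single-scale statement and gives one good direction per point. Second, you need $\lambda^{h}C^{1-h}<1$, where $h$ is the harmonic-measure lower bound for the good part of the diameter.

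The gap is in Step~2, and it starts with the depletion estimate.
\begin{itemize}
\item \emph{Point values.} The kernel is Lipschitz in the slot $\xiVec(x+y)$, so you may replace $\xiVec(x+y)$ by the shell average $\bar\xiVec_j$. But the remaining term $\int D(\hat y,\xiVec(x),\bar\xiVec_j)\,\omega(x+y)|y|^{-3}dy$ is controlled only through $|\xiVec(x)\times\bar\xiVec_j|$, that is, through the alignment of ball averages with the \emph{point value} $\xiVec(x)$. The space $\bmo_{1/|\log r|}$ gives no such control. The modulus fails the Dini condition: telescoping averages down to scale $0$ costs $\sum_j 1/j=\infty$. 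The space also contains unit fields such as $(\cos\log|\log|x||,\sin\log|\log|x||,0)$, which wind infinitely often near a point. John--Nirenberg does not help, since it only improves the integrability of $\xiVec-\bar\xiVec$, which is bounded anyway.
\item \emph{Shell count.} From $\int_{B_\rho}\omega\lesssim\rho$, a shell $|y|\sim\rho$ contributes $\rho^{-3}\cdot\rho=\rho^{-2}$ to $\int\omega(x+y)|y|^{-3}dy$, not $O(1)$. So either the dyadic sum runs down to scale $0$ and diverges, and a factor $1/|\log\rho|$ per shell cannot save it. Or there is a lower cutoff, and then the inner ball and the alignment of $\xiVec(x)$ with its local average need a small-scale input your sketch does not provide.
\end{itemize}

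The decisive gap comes next. Even granting a logarithmically depleted stretching, the sentence ``feeding this into the vorticity equation over the analyticity window and applying Biot--Savart once more, I would show that $\{|\uVec|>\lambda M\}$ is linearly sparse at scale $M^{-1}(\log M)^{-1}$'' restates the conclusion instead of deriving it. You would have to specify:
\begin{itemize}
\item which quantity is propagated over the window of length $\nu/M^2$;
\item how it bounds the one-dimensional measure of the superlevel set along some line through \emph{every} point;
\item why the logarithmic gain ends up in the length scale rather than in a constant.
\end{itemize}
Nothing in Step~2 links a bound on $\alpha$ to the geometry of the velocity superlevel set. This transfer is exactly the step the statement singles out (``this forces the 1D geometric scale of sparseness \dots''). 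Without it, Steps~1 and~3 have nothing to act on, and the proposal reduces the theorem to an assertion equivalent to it.
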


\subsection{Two Biot--Savart estimates at the core}

We use the elementary identity
\begin{equation}\label{eq:pi3}
\int_{\R^3}\frac{dy}{|y|^2\,|x-y|^2}=\frac{\pi^3}{|x|},
\end{equation}
(both sides are homogeneous of degree $-1$; the constant follows from $\widehat{|\cdot|^{-2}}=2\pi^2|k|^{-1}$ and
$\widehat{|\cdot|^{-1}}=4\pi|k|^{-2}$).

\medskip

Split $\uVec=\uVec_{near}+\uVec_{far}$, where $\uVec_{near}$ is the Biot--Savart velocity of $\omVec\mathbf 1_B$ and
$\uVec_{far}$ that of $\omVec\mathbf 1_{\R^3\setminus B}$:
\[
\uVec_{near}(x)=\frac{1}{4\pi}\int_B\frac{(x-y)\times\omVec(y)}{|x-y|^3}\,dy .
\]

\begin{assumption}\label{ass:far}
$C_{far}:=\sup_{t\in(t_0,T^*)}\ \sup_{|x|\le R_0/2}|\uVec_{far}(x,t)|<\infty$.
\end{assumption}
This is finite for instance if $\omVec\in L^\infty_t L^p(\R^3\setminus B)$ for some $p\in(3/2,3)$; for $|x|\le R_0/2$ the
kernel is bounded by $4R_0^{-2}$ on $B_{2R_0}\setminus B$ and by $|y|^{-2}$ beyond. It is a statement about the flow away
from the singular point, where the solution is smooth up to $T^*$.

\begin{lemma}[Biot--Savart at the core]\label{lem:BS}
Let $|\omVec|\le C_0|y|^{-2}$ on $B$ and let Assumption~\ref{ass:far} hold. Then for $|x|\le R_0/2$:
\begin{enumerate}[label=(\alph*),leftmargin=*]
\item\label{it:sup} $\displaystyle |\uVec(x)|\le\frac{\pi^2C_0}{4\,|x|}+C_{far}$; in particular $|x|\,|\uVec(x)|\le C_u:=\frac{\pi^2C_0}{4}+\frac{R_0C_{far}}{2}$.
\item\label{it:rad} Write $\omVec=\omega\,\es+\tilde\omVec$ on $B$ and let $\bar\omega$ be the average of $\omega$ over circles
centred on the axis $\R\es$ through the singular point (the azimuthal average about $\es$). Then
\begin{equation}\label{eq:radial}
\uVec_{near}(x)\cdot\hx=-\frac{1}{4\pi}\,(\es\times\hx)\cdot\nabla V_{\omega-\bar\omega}(x)
+\frac{1}{4\pi}\int_B\frac{\big((x-y)\times\tilde\omVec(y)\big)\cdot\hx}{|x-y|^3}\,dy,
\qquad V_f(x):=\int_B\frac{f(y)}{|x-y|}\,dy ,
\end{equation}
and consequently, if $|\omega-\bar\omega|\le\epsilon_\varphi|y|^{-2}$ and $|\tilde\omVec|\le\epsilon_d\,\omega$ on $B$,
\begin{equation}\label{eq:radial_bound}
|x|\,\big|\uVec(x)\cdot\hx\big|\ \le\ \frac{\pi^2}{4}\big(\epsilon_\varphi+C_0\,\epsilon_d\big)+\frac{R_0}{2}\,C_{far}.
\end{equation}
\end{enumerate}
\end{lemma}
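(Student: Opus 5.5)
Both parts come from the integral identity \eqref{eq:pi3}: we bound the Biot--Savart kernel of $\uVec_{near}$ pointwise by $|x-y|^{-2}$ and treat $\uVec_{far}$ by Assumption~\ref{ass:far}. Part (a) uses this directly; part (b) first rewrites the axial part of $\omVec$ as the gradient of a Newtonian potential so that azimuthal symmetry can remove its leading contribution.

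\emph{Part (a).} Since $|(x-y)\times\omVec(y)|\le|x-y|\,|\omVec(y)|$, we have $|\uVec_{near}(x)|\le\frac{C_0}{4\pi}\int_B|y|^{-2}|x-y|^{-2}\,dy$. Enlarging $B$ to $\R^3$ and applying \eqref{eq:pi3} gives the bound $\frac{C_0}{4\pi}\cdot\frac{\pi^3}{|x|}=\frac{\pi^2C_0}{4|x|}$. For $|x|\le R_0/2$, Assumption~\ref{ass:far} adds $C_{far}$, giving the first inequality. Multiplying by $|x|$ and using $|x|\le R_0/2$ gives the constant $C_u$.

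\emph{Part (b), the identity.} Insert $\omVec=\omega\es+\tilde\omVec$ into $\uVec_{near}$. The $\tilde\omVec$ part, dotted with $\hx$, is exactly the second term of \eqref{eq:radial}. For the axial part, use $\frac{x-y}{|x-y|^3}=-\nabla_x\frac{1}{|x-y|}$. Since $\omega\in L^{3/2,\infty}$ on $B$ (and bounded on the inner ball), differentiation under the integral is justified and the contribution is $-\frac1{4\pi}\nabla V_\omega(x)\times\es$. The triple-product identity then gives $(\nabla V_\omega\times\es)\cdot\hx=(\es\times\hx)\cdot\nabla V_\omega$.

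It remains to replace $\omega$ by $\omega-\bar\omega$, i.e.\ to show $(\es\times\hx)\cdot\nabla V_{\bar\omega}=0$. The vector field $|x|\,\es\times\hx=\es\times x$ generates rotations about the axis $\R\es$. Since $B$ is a ball centred on that axis and $\bar\omega$ is invariant under these rotations, $V_{\bar\omega}$ is rotation invariant. Hence its derivative along $\es\times x$ vanishes, which proves \eqref{eq:radial}.

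\emph{Part (b), the bound.} Using $|\es\times\hx|\le1$ and $|\nabla V_f(x)|\le\int_B|f(y)|\,|x-y|^{-2}\,dy$, the hypothesis $|\omega-\bar\omega|\le\epsilon_\varphi|y|^{-2}$ and \eqref{eq:pi3} bound the first term by $\frac{\pi^2\epsilon_\varphi}{4|x|}$. Likewise $|\tilde\omVec|\le\epsilon_d\omega\le\epsilon_dC_0|y|^{-2}$ bounds the second term by $\frac{\pi^2C_0\epsilon_d}{4|x|}$. Adding $|\uVec_{far}\cdot\hx|\le C_{far}$ and multiplying by $|x|\le R_0/2$ gives \eqref{eq:radial_bound}.

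\emph{Main obstacle.} The only delicate step is the symmetry cancellation: one must check that $V_{\bar\omega}$ is genuinely invariant under rotations about $\R\es$. This needs the domain $B$ to be rotation invariant about the axis (it is, being centred at the singular point on the axis), and $\bar\omega$ to be the average over exactly those rotation orbits.
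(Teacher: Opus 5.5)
Your proposal is correct and follows the paper's proof essentially step for step. For (a) you use the pointwise kernel bound plus \eqref{eq:pi3}; for (b) you write the axial part as $-\frac{1}{4\pi}\nabla V_\omega\times\es$, apply the triple product, cancel $V_{\bar\omega}$ by rotational symmetry about $\R\es$, and bound both terms with \eqref{eq:pi3}. The one cosmetic difference is that you use invariance of $V_{\bar\omega}$ under finite rotations, where the paper uses the commutation $(\es\times x)\cdot\nabla V_f=V_{(\es\times y)\cdot\nabla f}$; your version has the slight advantage of needing no differentiability of $\bar\omega$.
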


\begin{proof}
\ref{it:sup} $|\uVec_{near}(x)|\le\frac{1}{4\pi}\int_B\frac{|\omVec(y)|}{|x-y|^2}dy\le\frac{C_0}{4\pi}\cdot\frac{\pi^3}{|x|}$ by \eqref{eq:pi3}.

\medskip

\ref{it:rad} The $\es$-part of $\uVec_{near}$ is $\frac{1}{4\pi}W\times\es$ with $W(x)=\int_B\frac{(x-y)\,\omega(y)}{|x-y|^3}dy=-\nabla V_\omega(x)$.
Hence $\frac{1}{4\pi}(W\times\es)\cdot\hx=-\frac{1}{4\pi}(\es\times\hx)\cdot\nabla V_\omega$. Now $(\es\times x)\cdot\nabla$ is the
generator of rotations about the axis $\R\es$; the ball $B$ and the kernel $|x-y|^{-1}$ are invariant under these rotations, so
$(\es\times x)\cdot\nabla V_f=V_{(\es\times y)\cdot\nabla f}$, which vanishes for $f=\bar\omega$. This gives
\eqref{eq:radial}. For \eqref{eq:radial_bound}: $|(\es\times\hx)\cdot\nabla V_{\omega-\bar\omega}|\le|\nabla V_{\omega-\bar\omega}|
\le\int_B\frac{|\omega-\bar\omega|}{|x-y|^2}dy\le\frac{\pi^3\epsilon_\varphi}{|x|}$, the $\tilde\omVec$-term is at most
$\frac{1}{4\pi}\int_B\frac{|\tilde\omVec|}{|x-y|^2}dy\le\frac{\pi^2C_0\epsilon_d}{4|x|}$, and $|\uVec_{far}\cdot\hx|\le C_{far}$.
\end{proof}

\begin{remark}
For $\omega$ exactly symmetric about $\R\es$ and $\tilde\omVec=0$, \eqref{eq:radial} gives $\uVec_{near}\cdot\hx\equiv0$: the
near-field velocity is purely azimuthal about the vortex axis. In the spherical case $\omVec=\Phi|y|^{-2}\es$ with
$\Phi$ constant one has explicitly $\uVec_{near}=\Phi\,\hx\times\es/|x|$ (Newton's theorem: $V_\omega'(r)=-4\pi\Phi/r$).
The radial velocity at the core is produced only by the asymmetry of the magnitude about the vortex axis and by the
deviation of the direction from it.
\end{remark}

\subsection{(H$_k$) from local axial symmetry}

\begin{proposition}\label{prop:Hk}
Let $\omega=\Phi|x|^{-2}$ on $A$ with $c_0\le\Phi\le C_0$ and $|\nabla\ln\Phi|\le C_\Phi|x|^{-1}$, and suppose on $B\times(t_0,T^*)$
\[
|\omega-\bar\omega|\le\epsilon_\varphi|y|^{-2},\qquad |\xiVec-\es|\le\epsilon_d ,
\]
with Assumption~\ref{ass:far}. Then Assumption~\ref{ass:PH} (profile, $|\uVec|\le C_u|x|^{-1}$, and \textup{(H$_k$)}) holds on
$A\cap\{|x|\le R_0/2\}$ for every $k\in(0,3)$ with
\begin{equation}\label{eq:Hk_verified}
\frac{\pi^2}{4}\big(\epsilon_\varphi+C_0\,\epsilon_d\big)+\frac{R_0}{2}\,C_{far}+2\nu C_\Phi\ \le\ (3-k)\,\nu .
\end{equation}
\end{proposition}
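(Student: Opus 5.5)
The proof reduces to Lemma~\ref{lem:BS}: part \ref{it:sup} supplies the pointwise velocity bound, and part \ref{it:rad} supplies the bound on the radial velocity that turns into (H$_k$).

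\emph{Step 1 (profile and pointwise velocity bound).} The profile part of Assumption~\ref{ass:PH} is assumed outright, namely $\omega=\Phi|x|^{-2}$ with $c_0\le\Phi\le C_0$ and $|\nabla\ln\Phi|\le C_\Phi|x|^{-1}$. Divergence-freeness of $\uVec$ is automatic for the Navier--Stokes velocity. For the pointwise bound I note that $|\omVec|=\omega\le C_0|y|^{-2}$ on $A$. On the inner ball $|y|\le\ell$ we have $|\omVec|\le C_0\ell^{-2}\le C_0|y|^{-2}$, so the bound in fact holds on all of $B$. Lemma~\ref{lem:BS}\ref{it:sup} then gives $|x||\uVec(x)|\le C_u$ for $|x|\le R_0/2$.

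\emph{Step 2 (splitting the vorticity).} To reach (H$_k$) I write $\omVec=\omega\xiVec=\omega\es+\omega(\xiVec-\es)$ and set $\tilde\omVec:=\omega(\xiVec-\es)$. Then $|\tilde\omVec|\le\epsilon_d\,\omega$ by the hypothesis $|\xiVec-\es|\le\epsilon_d$. This is exactly the decomposition required in Lemma~\ref{lem:BS}\ref{it:rad}. The asymmetry hypothesis $|\omega-\bar\omega|\le\epsilon_\varphi|y|^{-2}$ is also assumed on $B$.

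\emph{Step 3 (radial bound and (H$_k$)).} Estimate \eqref{eq:radial_bound} yields
\[
|x|\,(u_r)_-\le|x|\,|u_r|\le\tfrac{\pi^2}{4}(\epsilon_\varphi+C_0\epsilon_d)+\tfrac{R_0}{2}C_{far}
\]
on $|x|\le R_0/2$. Adding $2\nu C_\Phi$ and invoking \eqref{eq:Hk_verified} gives
\[
\sup|x|(u_r)_-+2\nu C_\Phi\le(3-k)\nu,
\]
which is (H$_k$) on $A\cap\{|x|\le R_0/2\}$. Everything holds uniformly in $t$, since $C_{far}$ is a supremum over $t$ and the remaining constants are time-independent.

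\emph{Main obstacle.} The delicate point is the applicability of \eqref{eq:radial_bound} when $\omega$ is only controlled on the annulus. Inside the inner ball, $\omega$ need not be close to its azimuthal average at scale $\epsilon_\varphi|y|^{-2}$ unless this is assumed on all of $B$, and it is: the proposition states the hypotheses on $B\times(t_0,T^*)$. I would also check that the Lemma's integral bounds use only $|y|^{-2}$-type majorants, so that the inner ball contributes through \eqref{eq:pi3} without loss. I would further verify that the azimuthal average $\bar\omega$ is taken over circles inside $B$, so the rotation-invariance argument in the proof of Lemma~\ref{lem:BS} applies. Finally, I would remark that in a frame attached to a moving singular point, $\uVec$ is replaced by $\uVec-\dot X$, which is absorbed into $C_{far}$.
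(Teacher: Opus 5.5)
Your proposal is correct and follows the paper's proof. You use Lemma~\ref{lem:BS}\ref{it:sup} for the pointwise velocity bound. For (H$_k$) on $A\cap\{|x|\le R_0/2\}$, you apply \eqref{eq:radial_bound} with $\tilde\omVec=\omega(\xiVec-\es)$, add $2\nu C_\Phi$, and invoke \eqref{eq:Hk_verified}.

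Your check that $|\omVec|\le C_0|y|^{-2}$ extends to the inner ball, so that the lemma's hypothesis holds on all of $B$, is a detail the paper leaves implicit. The closing moving-frame remark is not needed, and it would in any case require a uniform bound on $\dot X$.
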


\begin{proof}
(H$_k$) asks for $|x|(u_r)_-+2\nu C_\Phi\le(3-k)\nu$; use \eqref{eq:radial_bound} with $\tilde\omVec=\omega(\xiVec-\es)$.
The bound on $|\uVec|$ is Lemma~\ref{lem:BS}\ref{it:sup}. Replace $R_0$ by $R_0/2$ in the definition of $A$.
\end{proof}

\begin{remark}
\begin{enumerate}[leftmargin=*]
\item $\epsilon_\varphi$, $C_0\epsilon_d$ and $\nu$ all carry the dimension of a diffusivity; \eqref{eq:Hk_verified} is a
core Reynolds-number condition on the \emph{asymmetric part} of the core only. The symmetric part, however large, does not enter.
\item The hypothesis $|\xiVec-\es|\le\epsilon_d$ is an interior hypothesis, but a scale-free one (a fixed small angle,
with no decay as $|x|\to0$). Theorem~\ref{thm:main} converts it into the decaying modulus $|\xiVec-e_t|\lesssim1/|\log|x||$, so there is no
circularity: the input is weaker than the output. It also implies the cap condition of Theorem~\ref{thm:main} with $\cos\phi_0=1-\epsilon_d^2/2$.
\item By contrast, for a Beltrami core $\uVec=h\xiVec$ and $u_r=h\,\xiVec\cdot\hx$, which has no reason to be small;
nothing in that geometry helps (H$_k$).
\end{enumerate}
\end{remark}

\subsection{The application}

With \eqref{eq:Ftan} in hand, Assumption~\ref{ass:F} is a quantitative alignment of the vorticity direction with an eigenvector of $S$ at the core, and we can state the application.

\begin{definition}[strain-aligned, inflow-controlled core]\label{def:core}
A critical spatial point singularity at $(0,T^*)$ with inner scale $\ell(t)$ has a \emph{strain-aligned, inflow-controlled core}
if there are $R_0$, $\es$, $k\in(0,3)$ and constants such that on $A$:
\begin{enumerate}[label=(\roman*),leftmargin=*]
\item (profile) $\omega=\Phi|x|^{-2}$, $c_0\le\Phi\le C_0$, $|\nabla\ln\Phi|\le C_\Phi|x|^{-1}$, and $|\nabla\xiVec|\le C_1/\max(|x|,\ell(t))$;
\item (inflow) \eqref{eq:Hk_verified} holds with the asymmetry constants $\epsilon_\varphi$, $\epsilon_d$ of Proposition~\ref{prop:Hk}
and Assumption~\ref{ass:far} --- or, more generally, (H$_k$) holds directly;
\item (alignment) $|P_{\xiVec^\perp}S\xiVec|\le\Lambda|x|^{-2}|\log|x||^{-3}$, with $R_0\le e^{-6/k}$ and $\frac{\Lambda}{\nu k}|\log R_0|^{-2}<\frac14\epsilon_d^2$;
\item (core history) $\omega_{core}(\sigma;t)\le\Lambda_0/|\log\sigma|$ for $\ell(t)^2/\nu \le \sigma\le\sigma_1$, $t\in(t_0+\sigma_1,T^*)$, where $\omega_{core}$ is
the oscillation of $\xiVec$ over the inner balls $\{|y|\le\ell(s)\}$, $s\in[t-\sigma,t]$.
\end{enumerate}
\end{definition}

\begin{remark}
The core history condition as stated above is somewhat generous. As an illustration, consider the case of asymptotically self-similar profiles and
let $\tau$ denote the (logarithmically) rescaled time. Then the condition is equivalent to the the following: the direction on the inner ball locks 
onto a limit direction at rate of at least $1/\tau$.
If the rescaled solution converges to a profile along a stable spectrum, the direction converges exponentially in $\tau$, far more than needed.
The marginal case, algebraic $1/\tau$ decay along a neutral (center-manifold) mode, is exactly the threshold.
\end{remark}

\begin{theorem}[Application]\label{thm:app}
If a critical spatial point singularity has a strain-aligned, inflow-controlled core, then there are $\rho_1>0$ and $C<\infty$,
depending only on the constants in Definition~\ref{def:core} and on $\nu$, such that
\[
\sup_{t\in(t_0+\sigma_1,T^*)}\ \|\xiVec(\cdot,t)\|_{\bmo_{1/|\log r|}(B_{\rho_1})}\le C .
\]
Consequently, by the criterion of Part~I, $(0,T^*)$ is not a singular point.
\end{theorem}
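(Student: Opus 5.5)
The plan is to verify, one item at a time, that a strain-aligned, inflow-controlled core (Definition~\ref{def:core}) satisfies every hypothesis of Corollary~\ref{cor:bmo}. The corollary then gives the uniform $\bmo_{1/|\log r|}$ bound, and Theorem~\ref{thm:project1_recall} gives regularity.

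\emph{Step 1: Assumptions~\ref{ass:PH} and~\ref{ass:grad}.} Item (i) of Definition~\ref{def:core} supplies the profile bounds and the critical gradient bound, which is Assumption~\ref{ass:grad}. Item (ii) supplies (H$_k$), either directly or via Proposition~\ref{prop:Hk}, which also gives $|\uVec|\le C_u|x|^{-1}$ through Lemma~\ref{lem:BS}\ref{it:sup} and Assumption~\ref{ass:far}. This holds on $A\cap\{|x|\le R_0/2\}$, so I replace $R_0$ by $R_0/2$ from the start. The conditions $R_0\le e^{-6/k}$ and the smallness of $\Lambda|\log R_0|^{-2}$ only become easier under this replacement.

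\emph{Step 2: Assumption~\ref{ass:F}.} The direction equation \eqref{eq:pde_condensed} has $F=F_{tan}=P_{\xiVec^\perp}S\xiVec$, and item (iii) bounds it by $\Lambda|x|^{-2}|\log|x||^{-3}$ together with $R_0\le e^{-6/k}$.

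\emph{Step 3: the cap condition (Assumption~\ref{ass:cap}) with a compatible smallness.} Take $e_*=\es$. From $|\xiVec-\es|\le\epsilon_d$ on $B$ (the hypothesis of Proposition~\ref{prop:Hk}) I get $\xiVec\cdot\es\ge1-\epsilon_d^2/2=:\cos\phi_0$ everywhere, and in particular on $\Sigma$. Here I need $\phi_0<\pi/4$, which requires $\epsilon_d$ small, say $\epsilon_d<\sqrt{2-\sqrt2}$. Then $1-\delta_0=1-\cos2\phi_0=2\sin^2\phi_0\approx\epsilon_d^2$. More precisely,
\[
1-\delta_0=1-(2\cos^2\phi_0-1)=2(1-\cos^2\phi_0)\ge 2(1-\cos\phi_0)=\epsilon_d^2 ,
\]
so the hypothesis $\frac{\Lambda}{\nu k}|\log R_0|^{-2}<\frac14\epsilon_d^2$ of item (iii) implies $\frac{\Lambda}{\nu k}|\log R_0|^{-2}<\frac12(1-\delta_0)$, as Theorem~\ref{thm:main} requires. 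If (H$_k$) is given directly instead of through Proposition~\ref{prop:Hk}, I would still read $\epsilon_d$ as a cap parameter on $\Sigma$. Making this bookkeeping consistent is the main point to check, together with the requirement $\phi_0<\pi/4$, which Definition~\ref{def:core} leaves implicit; I would add the smallness of $\epsilon_d$ to it if necessary.

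\emph{Step 4: core history and conclusion.} Item (iv) is exactly \eqref{eq:coremod}. Corollary~\ref{cor:bmo} then yields $\rho_1$ and $C$, depending only on the listed constants and $\nu$, with the uniform bound on $(t_0+\sigma_1,T^*)$.

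To apply Theorem~\ref{thm:project1_recall}, two further inputs are needed. The first is $\omega\in L^\infty_tL^{3/2,\infty}$, which is part of Definition~\ref{def:critical_point}. The second is a \emph{global} $\bmo_{1/|\log r|}$ bound on $\xiVec$. I would obtain it by localization: away from $B_{\rho_1}$ the solution is smooth up to $T^*$, since $0$ is the isolated singular point. There $\xiVec$ is Lipschitz wherever $\omega$ is bounded below, and Part~I only requires the condition in the region of intense vorticity. Hence the local bound near $0$ suffices, and the criterion excludes blow-up at $(0,T^*)$.

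I expect this localization to be the only nonroutine step, and I would resolve it by appealing to the local form of the Part~I criterion.
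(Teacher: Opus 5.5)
Your proposal is correct and follows the paper's proof. You check Definition~\ref{def:core}(i)--(iv) against Assumptions~\ref{ass:PH}, \ref{ass:grad}, \ref{ass:cap}, \ref{ass:F} and \eqref{eq:coremod}, using $\cos\phi_0=1-\epsilon_d^2/2$ and $1-\delta_0=2\sin^2\phi_0$ to absorb the smallness condition in (iii), and then apply Corollary~\ref{cor:bmo} and Part~I. Your two extra remarks address points the paper's one-line proof leaves implicit: that $\phi_0<\pi/4$ tacitly requires $\epsilon_d<\sqrt{2-\sqrt2}$, and that passing from the local bound on $B_{\rho_1}$ to the global hypothesis of Theorem~\ref{thm:project1_recall} needs a localization.
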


\begin{proof}
Definition~\ref{def:core}(i)--(ii) and Proposition~\ref{prop:Hk} give Assumptions~\ref{ass:PH} and~\ref{ass:grad}; (ii) with
$|\xiVec-\es|\le\epsilon_d$ gives the cap condition with $\delta_0=\cos2\phi_0$, $\cos\phi_0=1-\epsilon_d^2/2$, and the smallness
condition on $R_0$ in Theorem~\ref{thm:main} reads $\frac{\Lambda}{\nu k}|\log R_0|^{-2}<\frac12(1-\delta_0)$, which is implied by (iii) since
$1-\delta_0=2\sin^2\phi_0\ge\epsilon_d^2/2$; (iii) is Assumption~\ref{ass:F} by \eqref{eq:Ftan}; (iv) is the core-history condition of Corollary~\ref{cor:bmo}.
Apply Corollary~\ref{cor:bmo}.
\end{proof}

\subsection{Obstructions}

It is informative to detail two key structural obstructions.

\subsubsection*{Point-concentrated tube cores generate critical self-strain}

\begin{proposition}\label{prop:selfstrain}
Let $\omVec=\Phi(\vartheta)\,|x|^{-2}\,\es$ on $\R^3$, with $\Phi$ bounded, positive and depending only on the polar angle
$\vartheta$ from $\es$ (an $\es$-axisymmetric, scale-invariant tube core), and let $\uVec$ be its Biot--Savart velocity.
Then $F_{tan}=P_{\es^\perp}S\es=\partial_{\es}\uVec$ is homogeneous of degree $-2$ and does not vanish identically.
In the spherical case $\Phi\equiv\Phi_0$,
\begin{equation}\label{eq:selfstrain}
S\es=\partial_{\es}\uVec=-\frac{2\Phi_0\cos\vartheta}{|x|^2}\ \hx\times\es,\qquad
|F_{tan}|=\frac{\Phi_0}{|x|^2}\,|\sin2\vartheta| ,\qquad \es\cdot S\es=0 .
\end{equation}
\end{proposition}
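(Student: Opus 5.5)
The plan is to work directly with the Biot--Savart representation, exploiting that the vorticity points in the fixed direction $\es$ everywhere. Since $\omVec=\omega\,\es$ with $\omega=\Phi(\vartheta)|x|^{-2}$, the computation in the proof of Lemma~\ref{lem:BS}\ref{it:rad}, now carried out over all of $\R^3$, gives
\[
\uVec=\tfrac{1}{4\pi}\,W\times\es,\qquad W(x)=\int_{\R^3}\frac{(x-y)\,\omega(y)}{|x-y|^3}\,dy .
\]
The integral defining $W$ converges absolutely by \eqref{eq:pi3}. The potential $V_\omega$ itself diverges logarithmically at infinity, so I will work with $W$ (formally $-\nabla V_\omega$) rather than with $V_\omega$.

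Three structural facts then follow.

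First, homogeneity. The substitution $y=\lambda y'$ together with $\omega(\lambda y)=\lambda^{-2}\omega(y)$ gives $W(\lambda x)=\lambda^{-1}W(x)$. Hence $\uVec$ is homogeneous of degree $-1$ and $\nabla\uVec$ of degree $-2$. I will compute $\nabla\uVec$ as a distributional derivative, or equivalently as the principal-value Calder\'on--Zygmund integral plus the local term, which is smooth away from the origin.

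Second, the form of $F_{tan}$. Since $\nabla\uVec=S+\Omega_{spin}$ with $\Omega_{spin}\mathbf v=\tfrac12\omVec\times\mathbf v$ and $\omVec\times\es=0$, we get $S\es=(\es\cdot\nabla)\uVec=\partial_{\es}\uVec$, exactly as in Section~\ref{sec:strain}. Moreover $\partial_{\es}\uVec=\tfrac1{4\pi}(\partial_{\es}W)\times\es$ is orthogonal to $\es$. Therefore $\es\cdot S\es=0$ and $F_{tan}=P_{\es^\perp}S\es=\partial_{\es}\uVec$, which is homogeneous of degree $-2$.

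Third, the spherical case. Newton's theorem gives $W=4\pi\Phi_0\,x/|x|^2$: the mass in $B_r$ is $4\pi\Phi_0 r$, so the field is radial of size $4\pi\Phi_0/r$. Hence
\[
\uVec=\Phi_0\,\frac{x\times\es}{|x|^2},
\]
matching the remark after Lemma~\ref{lem:BS}. Differentiating along $\es$, and using $\es\times\es=0$ and $\partial_{\es}|x|^{-2}=-2(x\cdot\es)|x|^{-4}$, gives $\partial_{\es}\uVec=-2\Phi_0\cos\vartheta\,|x|^{-2}\,\hx\times\es$. Since $|\hx\times\es|=\sin\vartheta$, this yields $|F_{tan}|=\Phi_0|x|^{-2}|\sin2\vartheta|$, which is \eqref{eq:selfstrain}.

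For non-vanishing with general $\Phi$, I will argue by contradiction. Suppose $\partial_{\es}\uVec\equiv0$ on $\R^3\setminus\{0\}$. Then $\uVec$ is invariant under translations along $\es$ on every line $x_0+\R\es$ avoiding the origin, and hence so is $\nabla\times\uVec=\omVec$. But on such a line $\omega=\Phi|x|^{-2}\ge(\inf\Phi)|x|^{-2}$ is positive and tends to $0$ as $|x|\to\infty$, so it is not constant. This contradicts the invariance, provided $\inf\Phi>0$; for merely positive $\Phi$, I use the fact that $\Phi$ is bounded, so $\omega\to0$ along the line while $\omega>0$ somewhere on it.

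The main technical obstacle is justifying the differentiation of the Biot--Savart integral for a field that is singular at the origin and only $|y|^{-2}$-decaying at infinity. The bound $|y|^{-2}|x-y|^{-3}$ is not integrable at $y=x$, so $\partial_{\es}\uVec$ must be taken as a principal value. I plan to handle this by splitting $\omega$ with a cutoff near $x$. The far part, supported away from $x$, can be differentiated under the integral, with convergence at infinity following from the $|y|^{-2}|y|^{-3}$ decay. The near part is smooth, because $\Phi$ is smooth in $\vartheta$ away from the origin; if $\Phi$ is not assumed smooth, it suffices that $\uVec$ is $C^1$ there in the distributional sense. The identity $S\es=\partial_{\es}\uVec$ and the contradiction argument only need $\nabla\uVec$ as a distribution, so this regularity is enough.
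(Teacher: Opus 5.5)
Your homogeneity argument, the observation that $\partial_{\es}\uVec=\frac1{4\pi}(\partial_{\es}W)\times\es$ is orthogonal to $\es$, and the spherical computation via Newton's theorem all match the paper's proof. Working with $W$ instead of the logarithmically divergent $V_\omega$ is a sensible refinement.

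The non-vanishing argument, however, has a genuine gap. It rests on $\nabla\times\uVec=\omVec$, which is false for this field. The model vorticity is not solenoidal: $\nabla\cdot\omVec=\partial_{\es}\omega\not\equiv0$, precisely because $\omega$ is not constant along lines parallel to $\es$. That is the very fact you invoke for the contradiction, and a non-solenoidal field is not a curl. Concretely, from $\uVec=\frac1{4\pi}W\times\es$, $\nabla\cdot W=4\pi\omega$ and $\nabla\times W=0$ one gets
\[
\nabla\times\uVec=-\omVec+\tfrac{1}{4\pi}\,\partial_{\es}W .
\]
The overall sign reflects the orientation of the kernel; the extra term is what matters. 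In the spherical case $\nabla\times\uVec=-2\Phi_0\cos\vartheta\,|x|^{-2}\,\hx$, which is radial. The extra term vanishes only when $\omega$ is invariant along $\es$, which is exactly the situation your contradiction excludes. So invariance of $\nabla\times\uVec$ along $\es$ tells you nothing directly about $\omega$.

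The fix is the paper's argument, which never takes the curl of $\uVec$. If $\partial_{\es}\uVec\equiv0$, then $\partial_{\es}W\parallel\es$. Since $W$ is curl-free, $\partial_{\es}W=\nabla(W\cdot\es)$, so $W\cdot\es$ depends on $z=x\cdot\es$ alone. It is homogeneous of degree $-1$, hence equal to $a_\pm/z$ on $\{\pm z>0\}$, and bounded by $C/|x|$, which forces $a_\pm=0$. Then $\partial_{\es}W\equiv0$, so $4\pi\omega=\nabla\cdot W$ is invariant along $\es$. Only at this point does your decay-along-lines observation give the contradiction. This is the paper's step $\partial_{\es}V_\omega=a/z\Rightarrow a=0$, written with $W$.

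Your translation-plus-decay idea also works if applied to $\uVec$ itself. The conditions $\partial_{\es}\uVec\equiv0$ and $|\uVec|\le C|x|^{-1}$ force $\uVec\equiv0$, i.e.\ $W\parallel\es$. The same curl-free/homogeneity argument then gives $W\equiv0$, contradicting $\nabla\cdot W=4\pi\omega>0$.

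Finally, the false identity is also hidden in the step $S\es=(\es\cdot\nabla)\uVec-\frac12\omVec\times\es=\partial_{\es}\uVec$, which you share with the paper. Computing directly from $S=\frac12(\nabla\uVec+\nabla\uVec^{\top})$ and $\uVec\cdot\es\equiv0$ gives $S\es=\frac12\partial_{\es}\uVec$. So the constants in \eqref{eq:selfstrain} should be halved. Homogeneity of degree $-2$, non-vanishing, and $\es\cdot S\es=0$ are unaffected.
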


\begin{proof}
For $\omVec=\omega\,\es$, $\uVec=\frac{1}{4\pi}W\times\es$ with $W=-\nabla V_\omega$, and $S\es=(\es\cdot\nabla)\uVec-\frac12\omVec\times\es=\partial_{\es}\uVec$.
So $F_{tan}=\partial_{\es}\uVec=-\frac{1}{4\pi}\nabla(\partial_{\es}V_\omega)\times\es$, which is tangential (orthogonal to $\es$)
and vanishes iff $\nabla(\partial_{\es}V_\omega)\parallel\es$, i.e.\ iff $\partial_{\es}V_\omega$ depends on $z=x\cdot\es$ alone.
Write $V_\omega=-4\pi\langle\Phi\rangle\log|x|+v(\vartheta)$ with $\langle\Phi\rangle$ the spherical mean and $\Delta_{S^2}v=-4\pi(\Phi-\langle\Phi\rangle)$;
then $\partial_{\es}V_\omega=|x|^{-1}w(\vartheta)$ is homogeneous of degree $-1$, and a function of $z$ alone with this homogeneity is $a/z$,
i.e.\ $w(\vartheta)=a/\cos\vartheta$. Boundedness of $\Phi$ makes $w$ bounded on the sphere, forcing $a=0$, hence $\partial_{\es}V_\omega\equiv0$;
but then $\Delta V_\omega=-4\pi\Phi(\vartheta)|x|^{-2}$ would be independent of $z$, which is impossible for bounded positive $\Phi$.
For \eqref{eq:selfstrain}: Newton's theorem gives $V'_\omega=-4\pi\Phi_0/r$, so $\uVec=\Phi_0\,\hx\times\es/|x|=\Phi_0\,x\times\es/|x|^2$, and
$\partial_{\es}(x/|x|^2)=\es/|x|^2-2(x\cdot\es)x/|x|^4$; crossing with $\es$ kills the first term.
\end{proof}

\begin{remark}[why this is fatal for a tube base state]
The strain in \eqref{eq:selfstrain} is pure tilting at the critical rate $\Phi_0/|x|^2$, to be compared with the diffusive rate
$\nu/|x|^2$ of $\Lop$. In the $\theta$-equation of Theorem~\ref{thm:main} the source is $F_{tan}\cdot e=F_{tan}\cdot(e-\xiVec)$, and a critical
source cannot be absorbed by any barrier: for the radial model with $|x|b_r=(k+1)\nu$, the equation $\Lop\psi=\Lambda|x|^{-2}$ with
$\psi=0$ at $|x|=\ell$ has the solution $\psi=\frac{\Lambda}{k\nu}\log(|x|/\ell)$, so the direction winds logarithmically as
$\ell(t)\to0$. Exact axisymmetry avoids this only because the vorticity of a ring is constant along the tube; point concentration
breaks the depletion at order one, and Proposition~\ref{prop:selfstrain} shows no choice of $\es$-axisymmetric shape factor restores it.
\end{remark}

\subsubsection*{Beltrami cores have critical twist}

\begin{proposition}\label{prop:twist}
Suppose $\uVec=h\,\xiVec+\uVec_\perp$ on $A$ with $|\nabla\times\uVec_\perp|\le\tfrac12\omega$, and $|h|\le|\uVec|\le C_u|x|^{-1}$.
Then
\[
\big|\xiVec\cdot(\nabla\times\xiVec)\big|\ \ge\ \frac{c_0}{2C_u}\,\frac{1}{|x|},
\qquad\text{hence}\qquad
|x|\,|\nabla\xiVec|\ \ge\ \frac{c_0}{2\sqrt2\,C_u} .
\]
\end{proposition}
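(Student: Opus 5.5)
The plan is to take the curl of the decomposition $\uVec=h\xiVec+\uVec_\perp$ and then project onto $\xiVec$. The product rule gives
\[
\nabla\times(h\xiVec)=\nabla h\times\xiVec+h\,\nabla\times\xiVec,
\]
so that
\[
\omega\,\xiVec=\om=\nabla\times\uVec=\nabla h\times\xiVec+h\,\nabla\times\xiVec+\nabla\times\uVec_\perp .
\]
Dotting with $\xiVec$ kills the term $\nabla h\times\xiVec$, which is orthogonal to $\xiVec$. This leaves the scalar identity
\[
\omega=h\,\xiVec\cdot(\nabla\times\xiVec)+\xiVec\cdot(\nabla\times\uVec_\perp).
\]

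Next I estimate the last term by the hypothesis $|\nabla\times\uVec_\perp|\le\tfrac12\omega$. This gives $|h|\,|\xiVec\cdot(\nabla\times\xiVec)|\ge\omega-\tfrac12\omega=\tfrac12\omega$.

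To turn this into the twist bound, I divide by $|h|\le C_u|x|^{-1}$ and use the profile lower bound $\omega=\Phi|x|^{-2}\ge c_0|x|^{-2}$ on $A$. This yields
\[
|\xiVec\cdot(\nabla\times\xiVec)|\ge\frac{c_0|x|^{-2}}{2C_u|x|^{-1}}=\frac{c_0}{2C_u}\,\frac1{|x|}.
\]
As a byproduct, $h$ cannot vanish anywhere on $A$, since the left side $\tfrac12\omega$ is strictly positive; the division is therefore legitimate.

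For the gradient bound I use the pointwise inequality $|\nabla\times\xiVec|\le\sqrt2\,|\nabla\xiVec|$, with $|\nabla\xiVec|$ the Frobenius norm. This follows from $(a_{ij}-a_{ji})^2\le2(a_{ij}^2+a_{ji}^2)$ summed over the three off-diagonal pairs. Combined with $|\xiVec\cdot(\nabla\times\xiVec)|\le|\nabla\times\xiVec|$ (because $|\xiVec|=1$), the first bound gives $|x|\,|\nabla\xiVec|\ge c_0/(2\sqrt2\,C_u)$.

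There is no real obstacle; the only points needing care are the following.
\begin{itemize}
\item The identity should be used only where $\omega>0$, so that $\xiVec$ is smooth; this holds on $A$.
\item The constant $\sqrt2$ depends on using the Frobenius norm for $|\nabla\xiVec|$, and this convention should be checked against the paper's.
\end{itemize}
I would close by remarking that this twist is critical: $|x|\,|\nabla\xiVec|$ is bounded below, so it saturates Assumption~\ref{ass:grad}, which is incompatible with the logarithmic decay required by Theorem~\ref{thm:main}.
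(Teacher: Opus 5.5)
Your proof is correct and follows the paper's argument step for step: you curl the decomposition, dot with $\xiVec$ to remove $\nabla h\times\xiVec$, absorb $\xiVec\cdot(\nabla\times\uVec_\perp)$ using $|\nabla\times\uVec_\perp|\le\tfrac12\omega$, divide by $|h|\le C_u|x|^{-1}$ with $\omega\ge c_0|x|^{-2}$, and finish with $|\nabla\times\xiVec|\le\sqrt2\,|\nabla\xiVec|$. Only your closing remark overreaches, since a pointwise bound $|x|\,|\nabla\xiVec|\ge c$ alone does not rule out log-$\bmo$ decay (for example, fast rotation of $\xiVec$ inside a cap of radius $1/|\log|x||$ has $|x|\,|\nabla\xiVec|$ of order one yet oscillation $O(1/|\log r|)$), whereas the paper's obstruction uses the sign-definite twist through a Stokes circulation argument.
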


\begin{proof}
$\omega\xiVec=\nabla\times\uVec=\nabla h\times\xiVec+h\,\nabla\times\xiVec+\nabla\times\uVec_\perp$. Dot with $\xiVec$:
$h\,\xiVec\cdot(\nabla\times\xiVec)=\omega-\xiVec\cdot(\nabla\times\uVec_\perp)\ge\omega/2\ge\frac{c_0}{2|x|^2}$.
Divide by $|h|\le C_u|x|^{-1}$ and use $|\nabla\times\xiVec|\le\sqrt2|\nabla\xiVec|$.
\end{proof}

\begin{remark}
The twist $\xiVec\cdot\nabla\times\xiVec$ is the rate at which the direction rotates about itself along vortex lines. A near-Beltrami
core with $|\uVec|\sim|x|^{-1}$ and $\omega\sim|x|^{-2}$ has twist $\sim|x|^{-1}$: the vortex lines are helices whose pitch is comparable
to the distance from the singular point. By Stokes' theorem on a disc of radius $\rho\sim|x_0|$ normal to the mean direction, the
circulation of $\xiVec-e$ around the boundary is $\gtrsim(c_0/C_u)\rho$, so $|\xiVec-e|\gtrsim c_0/C_u$ on a set of positive proportion of
the disc: the mean oscillation of $\xiVec$ on balls at distance $\sim r$ from the core is bounded below by a fixed constant at every scale,
and $\|\xiVec\|_{\bmo_{1/|\log r|}}\gtrsim|\log r|$. Neither the core-history condition (iv) nor the cap condition can hold for such a
core. 
\end{remark}

\medskip

\section{Acknowledgments}
The work is supported in part by the National Science Foundation grant DMS 2307657.

\end{document}